\font\tencyr=wncyr8
\def\cyr{\tencyr\cyracc}
\newsavebox{\ukrB}
\savebox{\ukrB}{{\cyr B}}
\newcommand{\accessed}{9 February 2009}
\newcommand{\reals}{\mathbb{R}}
\newcommand{\complex}{\mathbb{C}}
\newcommand{\integers}{\mathbb{Z}}
\newcommand{\circel}{\mathbb{S}}
\newcommand{\function}[2]{:#1 \rightarrow #2}
\newcommand{\of}[1]{\left( #1 \right)}
\newcommand{\setdef}[2]{\bigl\{ \hspace{0.5mm} #1 : \hspace{0.5mm} #2 \bigr\}}
\newcommand{\refeq}[1]{(\ref{eq:#1})}
\newcounter{ax}
\newcommand{\axiom}[1]{\refstepcounter{ax}\label{ax:#1}\refax{#1}}
\newcommand{\refax}[1]{(${\mathcal A}_{\ref{ax:#1}}$)}
\newcommand{\refaxxx}[1]{(${\mathcal{A'}}_{\ref{ax:#1}}$)}
\newcommand{\sym}[1]{\mathit{Sym}(#1)}
\newcommand{\e}{\varepsilon}
\newtheorem{theorem}{Theorem}[section]
\newtheorem{lemma}[theorem]{Lemma}
\newtheorem{example}[theorem]{Example}
\newtheorem{definition}[theorem]{Definition}
\newenvironment{proofof}[1]{\par\smallbreak\noindent{\bf Proof of~#1.~}}
{\unskip\nobreak\hfill \qed \par\medbreak}
\newcounter{oq}
\newcommand{\que}{\refstepcounter{oq}\par{\bf \theoq.}~}
\title{Fermat's spiral and the line between Yin and Yang}
\author{Taras Banakh\,$^*$}\thanks{$^*$\,%
Department of Mechanics and Mathematics, Lviv National University,
Universytetska 1, Lviv 79000, Ukraine.}
\author{Oleg Verbitsky\,$^\dag$}\thanks{$^\dag$\,%
Institute for Applied Problems of Mechanics and Mathematics,
Naukova St.\ 3-\usebox{\ukrB},
Lviv 79060, Ukraine.}
\author{Yaroslav Vorobets\,$^\ddag$}\thanks{$^\ddag$\,%
Department of Mathematics, Texas A\&M University
College Station, TX 77843-3368, USA.}
\date{}
\begin{document} 

\begin{abstract}
Let $D$ denote a disk of unit area. We call a set $A\subset D$ \emph{perfect}
if it has measure $1/2$ and, with respect to any axial symmetry of $D$,
the maximal symmetric subset of $A$ has measure $1/4$.
We call a curve $\beta$ in $D$ an \emph{yin-yang line} if
\begin{itemize}
\item
$\beta$ splits $D$ into two congruent perfect sets,
\item
$\beta$ crosses each concentric circle of $D$ twice,
\item
$\beta$ crosses each radius of $D$ once.
\end{itemize}
We prove that Fermat's spiral is a unique yin-yang line in the class of smooth curves
algebraic in polar coordinates.
\end{abstract}

\maketitle

\markleft{\sc TARAS BANAKH, OLEG VERBITSKY, AND YAROSLAV VOROBETS}

\section{Introduction}
The yin-yang concept comes from ancient Chinese philosophy.
\emph{Yin} and \emph{Yang} refer to the two fundamental forces ruling
everything in nature and human's life. The two categories are 
opposite, complementary, and intertwined. They are
depicted, respectively, as the
dark and the light area of the well-known yin-yang symbol
(also \emph{Tai Chi} or \emph{Taijitu}, see Fig.\ \ref{fig:gallery}).
The borderline between these areas represents in Eastern
thought the equilibrium between Yin and Yang.

From the mathematical point of view, the yin-yang symbol is a bipartition
of the disk by a certain curve $\beta$, where by \emph{curve} we mean the image
of a real segment under an injective continuous map. We aim at identifying this curve,
up to deriving an explicit mathematical expression for it.
Such a project should apparently begin with choosing a set of axioms for
basic properties of the yin-yang symbol in terms of $\beta$.
We depart from the following three.

\begin{enumerate}
\item[\axiom{cong}]{\it
$\beta$ splits $D$ into two congruent parts.}
\item[\axiom{conc}]{\it
$\beta$ crosses each concentric circle of $D$ twice.}
\item[\axiom{rad}]{\it
$\beta$ crosses each radius of $D$ once (besides the center of $D$, which
must be visited by $\beta$ due to \refax{conc}).}
\end{enumerate}

While the first two properties are indisputable, our choice of the third axiom
requires some motivation as this condition is often not met in
the actual yin-yang design. Say, in the most familiar versions
some radii do not cross $\beta$ at all (Fig.\ \ref{fig:gallery}-a,b).
On the other hand, there occur yin-yang patterns where the number of crossings can
vary from 1 to 2 (Fig.\ \ref{fig:gallery}-d). Figure \ref{fig:gallery}-c
shows an instance with \refax{rad} obeyed, that arose from attempts
to explain the origins of the yin-yang symbolism \cite{Shac,Tsai}.%
\footnote{The border between Yin and Yang consists of two semicircles in Fig.\ \ref{fig:gallery}-b
and is Archimedes's spiral in Fig.\ \ref{fig:gallery}-c;
the discussion of Fig.\ \ref{fig:gallery}-a,d is postponed to Appendix \ref{app:flags}.}
All the variations can be treated uniformly if we take a dynamic look at the subject.
It will be deduced from our axiomatics that $\beta$ is a spiral.
We therefore can consider a continuous family of yin-yang symbols determined by the spiral
as shown in Fig.\ \ref{fig:evolution}. Thus, \refax{rad} specifies a
single representative that will allow us to expose the whole family.

\begin{figure}
\centerline{\includegraphics{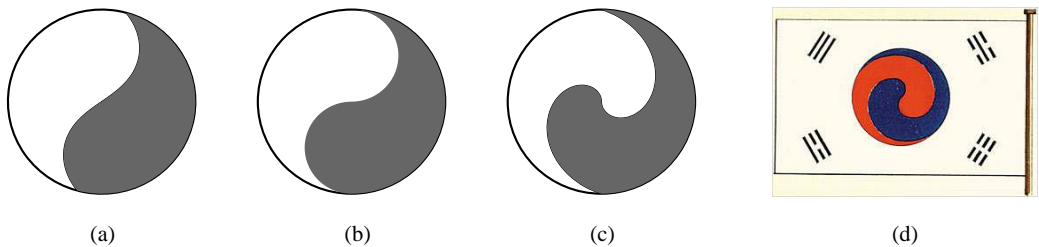}}
\caption{A gallery of yin-yang symbols: (a) a clasical version, cf.\ \cite{Britan};
(b) a modern version; (c) the version presented in \cite{Shac,Tsai};
(c) a Korean flag from 19th century \cite{flag}.}
\label{fig:gallery}
\end{figure}

There is a huge variety of curves satisfying \refax{cong}--\refax{rad}
and hence an essential further specification is needed. We suggest another
axiom, which is both mathematically beautiful and philosophically meaningful.
It comes from continuous Ramsey theory.

\begin{figure}
\centerline{\includegraphics{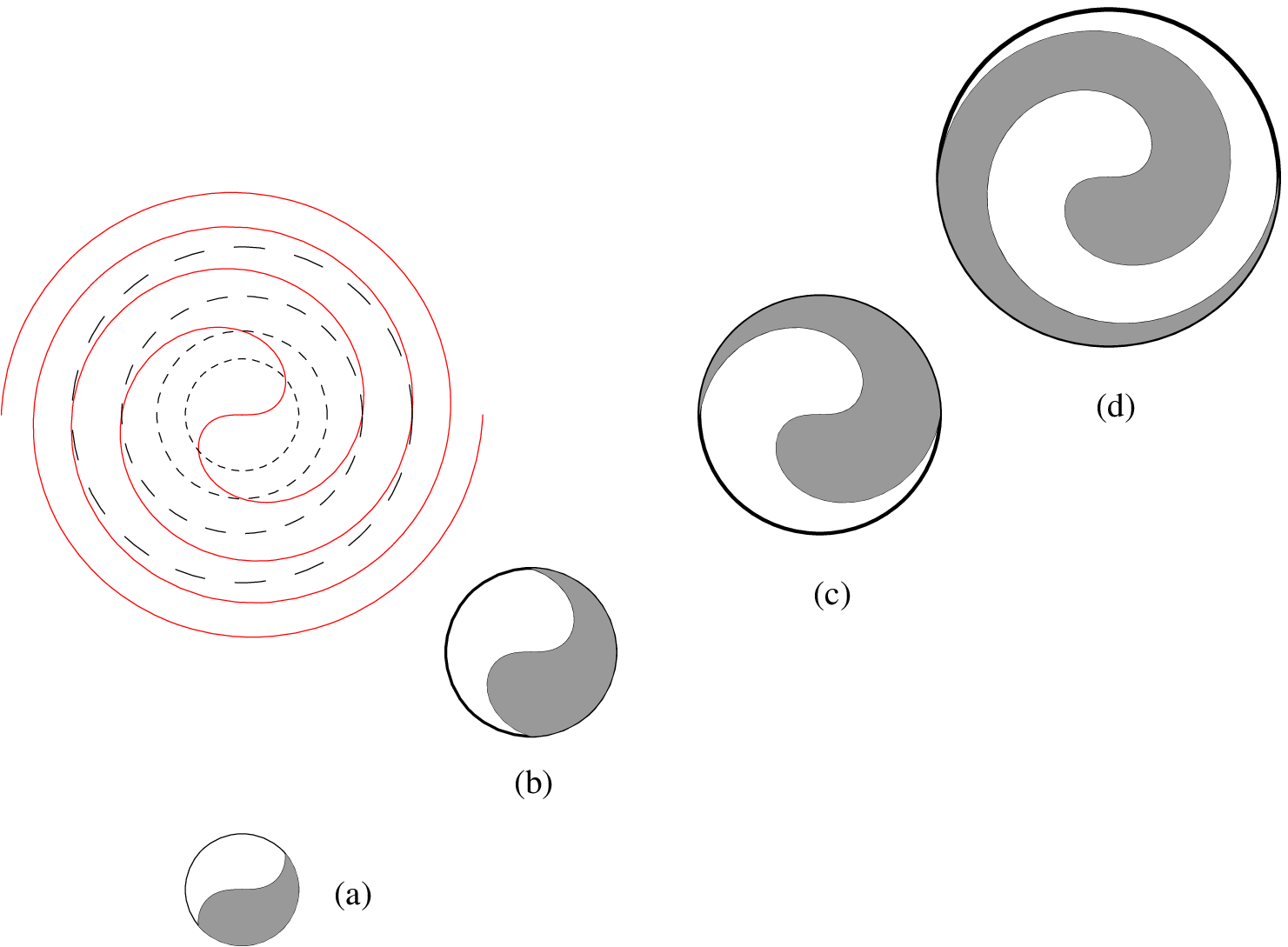}}
\caption{Evolution of the yin-yang symbol (cf.\ Fig.~\protect\ref{fig:gallery}). 
Phase (c) is canonized by axioms \protect\refax{cong}--\protect\refax{alg}.}
\label{fig:evolution}
\end{figure}

Let $\Omega$ be a bounded figure of revolution in a Euclidean space.
Suppose that $\mu(\Omega)=1$, where $\mu$ denotes the Lebesgue measure.
Consider the set $\sym\Omega$ of symmetries of $\Omega$, i.e., of
all space isometries taking $\Omega$ onto itself. Call a set $X\subseteq\Omega$
\emph{symmetric} if $s(X)=X$ for some non-identity $s\in\sym\Omega$.
The density version of a simple Ramsey-type result says that every 
measurable set $A\subseteq\Omega$ contains a symmetric subset of measure
at least $\mu(A)^2$ (see our previous work \cite{Izv}, a survey article \cite{EJC}, or
an overview in Section \ref{s:ramsey}). For the case that $\Omega$ is
the circle in $\reals^2$ or the sphere or the ball in $\reals^n$, $n\ge3$,
we proved an even stronger fact \cite{Izv,EJC}: If $0<\mu(A)<1$, then
$A\subset\Omega$ always contains a symmetric subset whose measure is strictly
more than $\mu(A)^2$. 

On the other hand, we discovered a magic difference
of the plane disk $\Omega=D$ from the aforementioned figures: it contains a set
$A\subset D$ of measure $1/2$ all whose symmetric subsets have measure
at most $1/4$.
Let us call such an $A$ \emph{perfect}. Any perfect set $A$ has a remarkable
property: For \emph{every} axial symmetry $s$ of $D$, the maximum subset of
$A$ symmetric with respect to $s$ has measure $1/4$.
In words admitting far-going esoteric interpretations, perfect sets demonstrate
a sharp equilibrium between their ``symmetric'' and ``asymmetric'' parts,
whatever particular symmetry $s$ is considered.
We put this phenomenon in our axiom system.

\begin{enumerate}
\item[\axiom{perf}]{\it
$\beta$ splits $D$ into perfect sets (from now on we suppose that $D$
has unit area).}
\end{enumerate}

There is still a lot of various examples of
curves satisfying \refax{cong}--\refax{perf}. Nevertheless, we are able 
to show that these four axioms determine a unique curve $\beta$
after imposing two other natural conditions.

\begin{enumerate}
\item[\axiom{smooth}]{\it
$\beta$ is smooth, i.e., has an infinitely differentiable
parameterization $\beta\function{[0,1]}D$ with nonvanishing derivative.}
\end{enumerate}

\noindent
The other condition expresses a belief that the border between Yin and Yang
should be cognizable, i.e., it should not be transcendent neither in the 
philosophical nor in the mathematical sense.

\begin{enumerate}
\item[\axiom{alg}]{\it
$\beta$ is algebraic in polar coordinates.}
\end{enumerate}

By \emph{polar coordinates} we mean the mapping $\Pi$
from the two-dimensional space of parameters $(\phi,r)$ onto two-dimensional
space of parameters $(x,y)$ defined by the familiar relations
$x=r\cos\phi$ and $y=r\sin\phi$. The $(x,y)$-space is considered the standard
Cartesian parameterization of the plane. Note that, like $x$ and $y$,
both $\phi$ and $r$ are allowed to take on any real value. A curve
in the $(\phi,r)$-plane is \emph{algebraic} if all its points satisfy an equation
$P(x,y)=0$ for some bivariate nonzero polynomial $P$. A curve in the $(x,y)$-plane
is \emph{algebraic in polar coordinates} if it is the image of
an algebraic curve under the mapping~$\Pi$.

A classical instance of a curve both smooth and algebraic in polar coordinates
is \emph{Fermat's spiral} (exactly this spiral is drown in 
Fig.\ \ref{fig:evolution}). Fermat's spiral is defined by equation $a^2r^2=\phi$.
Its part specified by the
restriction $0\le\phi\le\pi$ (or, equivalently, $-\sqrt\pi/a\le r \le\sqrt\pi/a$)
is inscribed in the disk of area $(\pi/a)^2$.

\begin{theorem}\label{thm:yy}
Fermat's spiral $\pi^2r^2=\phi$ is, up to congruence, a unique curve satisfying
the axiom system \refax{cong}--\refax{alg}.
\end{theorem}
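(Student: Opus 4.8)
The plan is to translate each axiom into an analytic condition on the curve and then show that, among smooth curves algebraic in polar coordinates, these conditions pin down Fermat's spiral. First I would exploit \refax{conc} and \refax{rad} to give $\beta$ a normal form. Crossing each concentric circle exactly twice and each radius exactly once strongly suggests that, after removing the center, $\beta$ is the graph of a function $r\mapsto\phi$ that is strictly monotone in $|r|$ on each side of the origin; combined with smoothness and the algebraicity hypothesis, $\beta=\Pi(C)$ for an algebraic curve $C$ in the $(\phi,r)$-plane, and the monotonicity forces $C$ to be (a branch of) the zero set of a polynomial of the very restricted shape $\phi=Q(r)$ or, more symmetrically, $P(\phi,r^2)=0$. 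The ``crosses each radius once'' axiom, read over all real $\phi$, should further collapse this to a relation of the form $a\phi=r^2$ possibly composed with an odd reflection — i.e.\ Fermat's spiral — unless higher-degree terms survive; ruling those out is where \refax{cong} and \refax{perf} enter.

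Next I would analyze \refax{cong}: the two pieces $Y_{\mathrm{in}},Y_{\mathrm{out}}$ into which $\beta$ splits $D$ are congruent, so there is an isometry $g$ of the plane with $g(D)=D$ and $g$ swapping the two pieces; hence $g(\beta)=\beta$. Since $g$ fixes the disk it fixes the center, so $g$ is a rotation or a reflection. A reflection would make $\beta$ symmetric and would, via \refax{perf} applied to that very axis, force a symmetric subset of measure $1/2>1/4$, a contradiction; so $g$ is a rotation, and because it exchanges two complementary sets it must be the rotation by $\pi$. Thus $\beta$ is centrally symmetric: in polar coordinates $\phi\mapsto\phi+\pi$ maps $\beta$ to itself. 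Feeding this into the normal form $P(\phi,r^2)=0$ and using that the two branches $r>0$, $r<0$ are swapped by this half-turn, I get that $\phi\mapsto\phi+\pi$ must be realized inside the algebraic relation, which for a polynomial relation is only possible if the $\phi$-dependence is through $\phi$ linearly (any genuine polynomial nonlinearity in $\phi$ cannot be invariant under an infinite shift-by-$\pi$ action combined with the $r\to-r$ swap on the finitely many branches). This is the step I expect to carry most of the weight, and the one I would spend the most care on: showing that central symmetry plus algebraicity forces the relation $r^2 = a\phi + b$, after which re-centering the angle kills $b$.

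Finally, with $\beta$ reduced to $r^2=a\phi$ on $-\sqrt\pi/a\le r\le\sqrt\pi/a$ — a one-parameter family, all members congruent after the unit-area normalization fixes $a$ — I would verify that this curve actually satisfies \refax{perf}, i.e.\ that each half is a perfect set. Here I would use the Ramsey-theoretic background from Section~\ref{s:ramsey}: for a given axial symmetry $s$ of $D$ with axis $\ell$, the maximal $s$-symmetric subset of $Y_{\mathrm{in}}$ is $Y_{\mathrm{in}}\cap s(Y_{\mathrm{in}})$, and one must check its measure equals exactly $1/4$. The key geometric feature of Fermat's spiral that makes this work is the area-bisection property $r^2\propto\phi$: the region swept between angles $\phi$ and $\phi+\Delta$ inside radius $r(\phi+\Delta)$ has area linear in the angle, so reflecting across any diameter, the overlap $Y_{\mathrm{in}}\cap s(Y_{\mathrm{in}})$ decomposes into pieces whose areas telescope to $1/4$ independently of the axis. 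I would make this precise by parameterizing $Y_{\mathrm{in}}$ as $\{(\phi,r): 0\le r^2\le a\phi,\ \phi\ge0\}$ wrapped into $D$, computing the reflected copy explicitly, and integrating; the linearity $r^2=a\phi$ is exactly what makes the cross-term integral axis-independent and equal to $1/4$. Conversely, any extra polynomial term in the relation would destroy this exact balance for generic axes, which retroactively confirms that no non-Fermat algebraic smooth curve can satisfy \refax{perf} — closing the uniqueness.
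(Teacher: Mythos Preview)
Your main uniqueness step is wrong. In the second paragraph you claim that central symmetry together with algebraicity forces the polar relation to collapse to $r^2=a\phi+b$, arguing that ``any genuine polynomial nonlinearity in $\phi$ cannot be invariant under an infinite shift-by-$\pi$ action''. But central symmetry is a \emph{single} involution, not an infinite shift family, and in the $(\phi,r)$-plane it is realized simply by $r\mapsto -r$; any relation of the form $\phi=g(r^2)$ with $g$ a polynomial is automatically centrally symmetric, smooth, and algebraic. Concretely, for small $c$ the curve $\phi=\pi^2r^2+c\,\pi^2r^2(\pi^2r^2-\pi)$ satisfies \refax{cong}, \refax{conc}, \refax{rad}, \refax{smooth}, and \refax{alg}, yet is not Fermat's spiral. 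So \refax{perf} is not a post-hoc verification or a ``retroactive confirmation''; it is the axiom that actually singles out Fermat's spiral, and your argument never engages it in the uniqueness direction beyond a vague remark that extra terms ``would destroy this exact balance''.

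The paper's route is different in exactly this respect. It first transplants the problem to the cylinder via $F(r,\phi)=(\phi/2\pi,\pi r^2)$, turning a branch of $\beta$ into the graph of a monotone function $\alpha$, and then proves (Lemma~\ref{lem:alpha}) that \refax{perf} is \emph{equivalent} to the functional equation $\alpha(u+\tfrac14)=\alpha(u)+\tfrac12$. This says precisely that the smooth algebraic curve $\alpha$ meets its own translate $\alpha+(\tfrac14,\tfrac12)$ in infinitely many points. The paper then shows (Lemma~\ref{main1}, via analyticity of smooth algebraic curves, irreducibility, and B\'ezout) that a smooth algebraic curve with infinite intersection with a nontrivial translate of itself must lie on a line; hence $\alpha$ is linear and $\beta$ is Fermat's spiral. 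To repair your argument you would need to replace the faulty ``central symmetry $\Rightarrow$ linearity'' step by an honest exploitation of \refax{perf}, and the natural way to do that leads straight to the translation-invariance/B\'ezout argument the paper uses.
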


Thus, if we are willing to accept axioms \refax{cong}--\refax{alg},
the yin-yang symbol must look as in Fig.\ \ref{fig:evolution}.
Note that the factor of $\pi^2$ in the curve equation in Theorem \ref{thm:yy}
comes from the condition that $D$ has area 1. 
As all Fermat's spirals are homothetic, we can equally well draw the yin-yang symbol using, 
say, the spiral $\phi=r^2$. Varying the range of $r$, we obtain modifications as twisted 
as desired (see a MetaPost code in Appendix~\ref{app:code}).

In the next section we overview relevant facts from continuous Ramsey theory.
Theorem \ref{thm:yy} is proved in Section \ref{s:yy}. In Section \ref{s:minimality}
we discuss modifications of the axiom system \refax{cong}--\refax{alg}.
In particular, we show that, after removal of \refax{smooth} or \refax{alg},
the system is not any more categorical, that is,
does not specify the line between Yin and Yang uniquely.

\section{Density results on symmetric subsets}\label{s:ramsey}

We have mentioned several Ramsey-type results on symmetric subsets as a
source of motivation for \refax{perf}, the cornerstone of our yin-yang
axiomatics. Now we supply some details. Recall that we consider any
geometric figure $\Omega\subset\reals^n$ along with its symmetry group
$\sym\Omega$ consisting of the isometries of $\reals^n$ that take $\Omega$
onto itself. Given a measurable set $A\subseteq\Omega$, we address subsets
of $A$ symmetric with respect to a symmetry in $\sym\Omega$ and want to know
how large they can be. To prove the existence of a large symmetric subset,
it sometimes suffices to restrict one's attention to symmetries in a proper
subset of $\sym\Omega$. In the case of the circle we will focus on its
axial symmetries. After identification of the circle with the additive
group $\circel=\reals/\integers$, these symmetries are expressed in the form
$s_g(x)=g-x$.

Any figure of revolution $\Omega$ can be represented as the product 
$\Omega=\circel\times\Theta$ for an appropriate $\Theta$.
The Lebesgue measure on $\Omega$ is correspondingly decomposed into
$\mu=\lambda\times\nu$. It is supposed that 
$\lambda(\circel)=1$ and $\nu(\Theta)=1$.
Note that the $\Omega$ inherits the reflectional symmetries of $\circel$, namely
$s_g(x,y)=(g-x,y)$, where $x\in\circel$ and $y\in\Theta$. 

\begin{theorem}[\cite{Izv,EJC}]\label{thm:L}
Let $\circel\times\Theta$ be a figure of revolution. Then
every measurable set $A\subseteq\circel\times\Theta$ contains a symmetric subset
of measure at least $\mu(A)^2$.
\end{theorem}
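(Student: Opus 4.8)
The plan is to average, over all axial symmetries $s_g$ of $\circel\times\Theta$ with $g\in\circel$, the measure of the subset of $A$ left invariant by $s_g$, and to show that this average already exceeds $\mu(A)^2$; then some individual value of $g$ must do at least as well. Since each $s_g$ is an involution, the set $A\cap s_g(A)$ is contained in $A$ and is $s_g$-invariant (indeed $s_g(A\cap s_g(A))=s_g(A)\cap A$), so it suffices to produce one $g$ with $\mu\bigl(A\cap s_g(A)\bigr)\ge\mu(A)^2$.

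First I would slice $A$ along the revolution parameter: for $y\in\Theta$ put $A_y=\{x\in\circel:(x,y)\in A\}$. Because $s_g$ acts only on the circle coordinate, $\mu\bigl(A\cap s_g(A)\bigr)=\int_\Theta\lambda\bigl(A_y\cap(g-A_y)\bigr)\,d\nu(y)$. Integrating this in $g$ over $\circel$ and interchanging the order of integration, the inner integral turns into a convolution evaluated by translation invariance of $\lambda$: $\int_\circel\lambda\bigl(A_y\cap(g-A_y)\bigr)\,d\lambda(g)=\int_\circel\int_\circel\mathbf 1_{A_y}(x)\,\mathbf 1_{A_y}(g-x)\,d\lambda(x)\,d\lambda(g)=\lambda(A_y)^2$, using $\lambda(\circel)=1$. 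Hence $\int_\circel\mu\bigl(A\cap s_g(A)\bigr)\,d\lambda(g)=\int_\Theta\lambda(A_y)^2\,d\nu(y)$.

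Finally I would invoke the Cauchy--Schwarz (equivalently Jensen) inequality on the probability space $(\Theta,\nu)$, where $\nu(\Theta)=1$: $\int_\Theta\lambda(A_y)^2\,d\nu(y)\ge\bigl(\int_\Theta\lambda(A_y)\,d\nu(y)\bigr)^2=\mu(A)^2$. Thus the average of $\mu\bigl(A\cap s_g(A)\bigr)$ over $g\in\circel$ is at least $\mu(A)^2$, so there is a $g$ with $\mu\bigl(A\cap s_g(A)\bigr)\ge\mu(A)^2$, and $A\cap s_g(A)$ is the required symmetric subset.

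The one place that needs care, rather than computation, is the measurability bookkeeping: one must check that $(g,x,y)\mapsto\mathbf 1_A(x,y)\,\mathbf 1_A(g-x,y)$ is jointly measurable (which follows from measurability of $A$ together with continuity of the group operation on $\circel$), so that the two applications of Fubini's theorem above are legitimate and, in particular, $g\mapsto\mu\bigl(A\cap s_g(A)\bigr)$ is a measurable function of $g$. I expect this to be the main, though routine, obstacle; the rest of the argument is the short averaging computation sketched here.
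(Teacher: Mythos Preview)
Your proof is correct and essentially identical to the paper's: slice $A$ along $\Theta$, average $\mu(A\cap s_gA)$ over $g\in\circel$, use Fubini and translation invariance to reduce to $\int_\Theta\lambda(A_y)^2\,d\nu(y)$, and finish with Cauchy--Schwarz. The only difference is that you spell out the measurability check for Fubini, which the paper leaves implicit.
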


\begin{proof}
For a fixed $y\in\Theta$, let $A_{y}=\setdef{x\in\circel}{(x,y)\in A}$ be the 
corresponding section of $A$. By $\chi_U$ we denote the characteristic function 
of a set $U$. Since $s_g$ is involutive, the maximum subset of $A$
symmetric with respect to $s_g$ is equal to $A\cap s_gA$.
Its measure is representable as
$$
\mu(A\cap s_gA)=\int_\Omega\chi_{A\cap s_gA}(x,y)\,d\mu(x,y)=
\int_{\Theta}\int_\circel\chi_{A_{y}}(x)\chi_{A_{y}}(g-x)\,d\lambda(x)d\nu(y).
$$ 
Averaging it on $g$ and changing the order of integration, we have
$$
\int_{\circel}\mu(A\cap s_gA)\,d\lambda(g)=
\int_{\Theta}\int_{\circel}\chi_{A_{y}}(x)\int_{\circel}\chi_{A_{y}}(g-x)
\,d\lambda(g)d\lambda(x)d\nu(y)
=\int_{\Theta}\lambda(A_{y})^2\,d\nu(y).
$$
By the Cauchy-Schwartz inequality,
$$
\int_{\circel}\mu(A\cap s_gA)\,d\lambda(g)\ge
\of{\int_{\Theta}\lambda(A_{y})\,d\nu(y)}^2=\mu(A)^2.
$$
Thus, there must exist a $g\in\circel$ such that $\mu(A\cap s_gA)\ge\mu(A)^2$.
\end{proof}

In some cases Theorem \ref{thm:L} admits a non-obvious improvement.

\begin{theorem}[\cite{Izv,EJC}]\label{thm:SL}
Let $\Omega$ be the circle in $\reals^2$ or the sphere or the ball in $\reals^k$, $k\ge3$.
Suppose that $A\subset\Omega$ is measurable. If $0<\mu(A)<1$, then $A$ contains a symmetric 
subset of measure strictly more than $\mu(A)^2$. 
\end{theorem}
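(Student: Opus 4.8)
The plan is to prove, in each of the three cases, that the \emph{average} value of $\mu(A\cap sA)$ over a natural family of involutive symmetries $s$ of $\Omega$ is strictly greater than $\mu(A)^2$. Since an involution $s$ satisfies $s(A\cap sA)=sA\cap s^2A=A\cap sA$, the set $A\cap sA$ is then symmetric with respect to $s$, and some member of the family yields the desired symmetric subset. In each case the weak bound ``average $\ge\mu(A)^2$'' reproduces the Cauchy--Schwarz computation in the proof of Theorem~\ref{thm:L}; the entire difficulty is to make the inequality strict, and this is precisely where one uses that $\chi_A$ is $\{0,1\}$-valued with $0<\mu(A)<1$ (so $\chi_A$ is not a.e.\ constant) and that $\Omega$ is not the planar disk.

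For the circle $\Omega=\circel=\reals/\integers$ I would use Fourier analysis. Writing the axial symmetries as $s_g(x)=g-x$, one has $\mu(A\cap s_gA)=\of{\chi_A*\chi_A}(g)$: a continuous function of $g$ with mean $\widehat{\chi_A}(0)^2=\mu(A)^2$ and $n$-th Fourier coefficient $\widehat{\chi_A}(n)^2$. As $\chi_A$ is $\{0,1\}$-valued and $0<\mu(A)<1$, it is not a.e.\ constant, so $\widehat{\chi_A}(n)\ne0$ for some $n\ne0$; then $\widehat{\chi_A}(n)^2\ne0$, the convolution is non-constant, and a non-constant continuous function on $\circel$ exceeds its mean somewhere. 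Hence $\mu(A\cap s_gA)>\mu(A)^2$ for some $g$.

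For the sphere $\Omega=S^{k-1}\subset\reals^k$ with $k\ge3$ I would instead average $\mu(A\cap s_HA)$ over the reflections $s_H$ in the hyperplanes $H$ through the origin. A direct computation of the push-forward, for fixed $x$, of the uniform measure on unit normals $n$ under $n\mapsto s_{H(n)}(x)=x-2\langle x,n\rangle n$ shows that this push-forward has density $\kappa^{-1}|x-y|^{2-k}$ with respect to $\mu$, where $\kappa=\int_{S^{k-1}}|x-y|^{2-k}\,d\mu(y)$ is a finite positive constant, independent of $x$ by symmetry. Hence the average of $\mu(A\cap s_HA)$ equals $\kappa^{-1}$ times the mutual energy $\iint\chi_A(x)\chi_A(y)\,|x-y|^{2-k}\,d\mu(x)d\mu(y)$. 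Splitting $\chi_A\,\mu=\mu(A)\,\mu+\sigma$ with $\sigma=(\chi_A-\mu(A))\mu$, a signed measure of total mass zero and finite energy, the constant part contributes exactly $\mu(A)^2$ and the cross term vanishes, so the average is $\mu(A)^2+\kappa^{-1}\iint|x-y|^{2-k}\,d\sigma(x)d\sigma(y)$. For $k\ge3$ the Newtonian kernel $|x-y|^{2-k}$ is strictly positive-definite on signed measures of total mass zero (its Fourier transform on $\reals^k$ being a positive multiple of $|\xi|^{-2}$), so this last energy is $>0$ because $\sigma\ne0$; thus the average exceeds $\mu(A)^2$ and some $s_H$ works. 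For $\Omega=B^k$, $k\ge3$, the reflections $s_H$ preserve $|x|$, so the averaged quantity decomposes as an integral over the spheres $rS^{k-1}$, $0<r\le1$; on each sphere the above positivity applies, and one Cauchy--Schwarz step in $r$ finishes the estimate, the only equality case being $\chi_A$ constant on almost every sphere, i.e.\ $A$ a union of spherical shells --- but then $A$ is fixed by every $s_H$ and $\mu(A\cap s_HA)=\mu(A)>\mu(A)^2$ directly.

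The step I expect to be the main obstacle is the strict inequality in the higher-dimensional cases, namely the strict positive-definiteness of the reflection-averaging operator. This breaks into two points: (i) identifying the averaging kernel in closed form via the push-forward computation $n\mapsto s_{H(n)}(x)$ --- this is where the Newtonian kernel $|x-y|^{2-k}$, and hence the restriction $k\ge3$, appear, for at $k=2$ the kernel degenerates to a constant, the operator has rank one, and only ``$\ge\mu(A)^2$'' survives (consistent with the disk admitting perfect sets); and (ii) invoking the classical energy principle that the Newtonian kernel has strictly positive energy on nonzero measures of total mass zero. A more elementary route to the same strict inequality, avoiding potential theory, would apply the Cauchy--Schwarz step from the proof of Theorem~\ref{thm:L} to \emph{every} axis of revolution: if the profile $\theta\mapsto\lambda(A_\theta)$ is non-constant for some axis one is done immediately, and in the remaining rigid case one re-runs the circle argument on the fibres, the extra rotational symmetries available for $k\ge3$ being what excludes the cancellation $\int_\Theta\widehat{\chi_{A_\theta}}(n)^2\,d\nu(\theta)=0$.
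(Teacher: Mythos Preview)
Your treatment of the circle case is essentially the paper's own argument: both compute $f(g)=\mu(A\cap s_gA)=(\chi_A*\chi_A)(g)$, observe that its Fourier coefficients are the squares $c_n^2$, and use that $\chi_A$ is not a.e.\ constant (since $0<\mu(A)<1$) to force some $c_n^2\ne0$ with $n\ne0$, whence $f$ is non-constant and exceeds its mean $\mu(A)^2$ somewhere. The paper phrases this as a contradiction from $f\le\mu(A)^2$ together with $\int f=\mu(A)^2$; you phrase it directly via continuity of the convolution, but the content is the same.

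For the sphere and ball in $\reals^k$, $k\ge3$, the paper gives no proof at all---it explicitly refers the reader to \cite{Izv,EJC}---so here you are not reproducing the paper but going beyond it. Your potential-theoretic route is sound: the push-forward computation you describe does produce the Newtonian kernel $|x-y|^{2-k}$ (a short Jacobian calculation in coordinates with $x=e_1$ confirms the density is $\propto(1-y_1)^{-(k-2)/2}=c\,|x-y|^{2-k}$), and the strict positive-definiteness of that kernel on nonzero signed measures of total mass zero is exactly the classical energy principle. Your reduction for the ball (slice into concentric spheres, apply the sphere inequality on each, then Cauchy--Schwarz in the radial variable, with the residual ``union of shells'' case handled directly) is also correct. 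The alternative ``elementary route'' you sketch at the end---varying the axis of revolution and falling back on the circle argument on the fibres---is plausible but, as written, too vague to count as a proof; the potential-theory argument is the one that actually carries the load in your proposal.
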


\begin{proof}
We here prove the theorem only for the circle $\circel$. 
The subsequent argument is hardly applicable to the spheres and balls;
the reader is referred to the treatment of these cases in \cite{Izv,EJC}.

Denote $f(g)=\mu(A\cap s_gA)$. We will prove that the sharp inequality 
$f(g)>\mu(A)^2$ must occur at least
for some $g$. Assume to the contrary that $f(g)\le\mu(A)^2$ for all $g$. Applying
the averaging argument as in the proof of Theorem \ref{thm:L}, we obtain
$\int_\circel f(g)\,dg=\mu(A)^2$ and, therefore,
\begin{equation}\label{eq:fconst}
f(g)=\mu(A)^2\mathrm{\ for\ almost\ all\ }g.
\end{equation}

Let $\chi_A(x)=\sum_{n\in\integers}c_ne^{2\pi inx}$
be the Fourier expansion of $\chi_A$ in $L_2(\circel)$.
Note that $f(g)=\int_\circel\chi_A(x)\chi_A(g-x)\,dx$, that is,
$f$ is the convolution $\chi_A\star\chi_A$.
This allows us to determine the Fourier expansion for $f$
from the Fourier expansion for $\chi_A$, namely
$$
f(g)=\sum_{n\in\integers}c_n^2e^{2\pi ing}
$$
in $L_2(\circel)$.
On the other hand, we know from \refeq{fconst} that $f$ is almost everywhere equal to a constant
(i.e., $f$ is a constant function in $L_2(\circel)$).
By uniqueness of the Fourier expansion we conclude that $c_n^2=0$ whenever $n\ne0$.
It follows that $\chi_A(x)=c_0$ almost everywhere. This is possible only
if $c_0=0$ or $c_0=1$, contradictory to the assumption that $0<\mu(A)<1$.
\end{proof}

Somewhat surprisingly, Theorem \ref{thm:SL} fails to be true for the disk.

\begin{definition}\rm
Let $D$ denote the disk of unit area. Call a set $A\subset D$ \emph{perfect}
if $\mu(A)=1/2$ and every symmetric subset of $A$ has measure at most $1/4$.
\end{definition}

Analysis of the proof of Theorem \ref{thm:SL} reveals
a remarkable property of perfect sets.

\begin{lemma}\label{lem:perfect}
For every axial symmetry $s_g$ of $D$, the maximum subset of a perfect set $A$
symmetric with respect to $s_g$ has measure exactly $1/4$, i.e.,
$$
\mu(A\cap s_gA)=1/4\emph{\ for\ all\ }g.
$$
\end{lemma}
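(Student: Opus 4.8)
The plan is to mimic the Fourier-analytic argument from the proof of Theorem~\ref{thm:SL}, but carried out fibrewise over the radial variable. Write $D=\circel\times\Theta$ as in Section~\ref{s:ramsey}, where $\Theta=[0,R]$ carries the push-forward $\nu$ of the radial part of Lebesgue measure (normalised so $\nu(\Theta)=1$), and let $A\subset D$ be perfect. As before, set $f(g)=\mu(A\cap s_gA)$. By averaging exactly as in the proof of Theorem~\ref{thm:L},
\[
\int_\circel f(g)\,d\lambda(g)=\int_\Theta\lambda(A_y)^2\,d\nu(y),
\]
while $\mu(A)=\int_\Theta\lambda(A_y)\,d\nu(y)=1/2$. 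Perfectness says $f(g)\le 1/4$ for every $g$; hence $\int_\circel f(g)\,d\lambda(g)\le 1/4$. On the other hand Cauchy--Schwarz gives $\int_\Theta\lambda(A_y)^2\,d\nu(y)\ge(\int_\Theta\lambda(A_y)\,d\nu(y))^2=1/4$. So both inequalities are equalities: $f(g)=1/4$ for almost every $g$, and the equality case of Cauchy--Schwarz forces $\lambda(A_y)=1/2$ for $\nu$-almost every $y$.

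The remaining task is to upgrade ``$f(g)=1/4$ for almost every $g$'' to ``for every $g$''. First I would observe that $f$ is continuous: $f(g)=\int_\Theta\bigl(\chi_{A_y}\star\chi_{A_y}\bigr)(g)\,d\nu(y)$, each inner convolution of $L^2(\circel)$ functions is continuous in $g$, the convolutions are uniformly bounded by $1$, and $y\mapsto(\chi_{A_y}\star\chi_{A_y})(g)$ is measurable, so dominated convergence makes $g\mapsto f(g)$ continuous. A continuous function equal to $1/4$ almost everywhere is identically $1/4$, which is the assertion of the lemma.

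I do not expect a genuine obstacle here; the one point demanding a little care is the measure-theoretic bookkeeping in the fibred picture — that the section $A_y$ is $\lambda$-measurable for $\nu$-almost every $y$, that $\lambda(A_y)^2$ is $\nu$-integrable, and that Fubini/Tonelli applies to both the averaging identity and the continuity argument. All of this follows from $A$ being a measurable subset of the product $\circel\times\Theta$ and boundedness of the integrands, so it is routine. (One could alternatively avoid the continuity step by invoking uniqueness of Fourier coefficients as in Theorem~\ref{thm:SL}, writing $f(g)=\sum_{n}\bigl(\int_\Theta c_n(y)^2\,d\nu(y)\bigr)e^{2\pi i ng}$ with $c_n(y)$ the Fourier coefficients of $\chi_{A_y}$; then $f\equiv1/4$ in $L^2$, but to get the pointwise statement one still wants continuity, so the direct argument above is cleaner.)
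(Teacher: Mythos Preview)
Your proof is correct and follows the same two-step structure as the paper's: first combine the averaging identity $\int_\circel f(g)\,d\lambda(g)\ge\mu(A)^2=1/4$ with the perfectness bound $f(g)\le 1/4$ to get $f(g)=1/4$ for almost every $g$ (this is exactly equation~\refeq{fconst}), and then upgrade to all $g$ by continuity of $f$. The only difference is the continuity argument itself: the paper approximates $A$ from within by a compact $K$ and from without by an open $U$ and uses that small rotations keep $r_hK\subset U$, whereas you use the fibrewise convolution representation $f(g)=\int_\Theta(\chi_{A_y}\star\chi_{A_y})(g)\,d\nu(y)$ together with dominated convergence. Both are standard; the paper's route is slightly more self-contained (no Fubini or convolution facts needed), while yours makes the link to the harmonic-analytic viewpoint of Theorem~\ref{thm:SL} more explicit.
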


\begin{proof}
Let $f(g)=\mu(A\cap s_gA)$. The lemma follows from \refeq{fconst} and the continuity of $f$.
We only have to prove the latter.

Given an $\varepsilon>0$, choose a compact set $K\subset A$ and an open subset $U\subset D$ so
that $K\subset A\subset U$ and $\mu(U)-\mu(K)<\e/2$. 
Let $r_h$ denote the rotation of $D$ by angle $2\pi h$.
For any $\epsilon>0$ there is a $\delta>0$ such that, if $|h|<\delta$,
then the distance between $r_hx$ and $x$ is less than $\epsilon$ for every $x\in D$.
Using this and the compactness of $K$, we can take $\delta>0$ so small that
$r_hK\subset U$ whenever $|h|<\delta$. Let $g'=g+h$. Assuming that $|h|<\delta$, we have
\begin{eqnarray*}
|f(g')-f(g)|&\le&\mu(s_{g'}A\setminus s_gA)+\mu(s_gA\setminus s_{g'}A)=
\mu(A\setminus r_hA)+\mu(A\setminus r_{-h}A)\\
&\le&\mu(U\setminus r_hK)+\mu(U\setminus r_{-h}K)=
2(\mu(U)-\mu(K))<\e,
\end{eqnarray*}
which witnesses the continuity of $f$ at~$g$.
\end{proof}

\begin{theorem}[\cite{Izv,EJC}]\label{thm:perfect}
Perfect sets exist.
\end{theorem}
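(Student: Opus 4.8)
The plan is to exhibit a perfect set explicitly. Using the product representation $D=\circel\times\Theta$ recalled in Section~\ref{s:ramsey}, I would take $\Theta=[0,1]$ with $\nu$ the Lebesgue measure, $\lambda$ the Lebesgue probability measure on $\circel=\reals/\integers$, and $\mu=\lambda\times\nu$. For a ``radius'' $t\in[0,1]$ let the section over $t$ be the half-open arc $A_t=[t/2,\,t/2+1/2)\subset\circel$ of length $1/2$, and put $A=\setdef{(\phi,t)}{\phi\in A_t}$. Then $A$ is measurable and $\mu(A)=\int_0^1\lambda(A_t)\,dt=1/2$. (If $t$ is read as the area enclosed by the concentric circle of the corresponding radius, one checks that the boundary of $A$ is a copy of Fermat's spiral $\pi^2r^2=\phi$; this is the reason for the choice, but it is not used in the argument.)

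Next I would verify the defining inequality for the reflections $s_g$, $g\in\circel$. Since $s_g$ acts on $D$ fixing each circle of revolution, the section of $s_gA$ over $t$ is $s_g(A_t)$, which up to a null set is again an arc of length $1/2$; an elementary computation shows that two arcs of length $1/2$ whose left endpoints differ (mod~$1$) by $\delta\in[0,1)$ overlap in a set of measure $\tau(\delta):=|\delta-1/2|$, and $\int_0^1\tau(u)\,du=1/4$. Here the endpoint offset works out to $\delta(t)=g-t-1/2\pmod 1$, and $t\mapsto\delta(t)$ is a measure-preserving bijection of $[0,1)$. Hence, by Fubini and the substitution $u=\delta(t)$,
$$
\mu\of{A\cap s_gA}=\int_0^1\lambda\of{A_t\cap s_gA_t}\,dt=\int_0^1\tau\of{\delta(t)}\,dt=\int_0^1\tau(u)\,du=\frac14
$$
for every $g$. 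By the remark in the proof of Theorem~\ref{thm:L}, this says precisely that every subset of $A$ symmetric under a reflection has measure $\le 1/4$, with equality attained.

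It remains to handle the rotations, the only other elements of $\sym D$. For $h\in\circel$, $h\ne0$, the largest subset of $A$ invariant under the rotation $r_h$ is $\bigcap_{n\in\integers}r_{nh}A$, whose section over $t$ is $\bigcap_{n}\of{A_t+nh}$. The set $\setdef{nh}{n\in\integers}$ is either dense in $\circel$ (if $h$ is irrational) or consists of $q\ge2$ equally spaced points (if $h=p/q$ in lowest terms); in either case the complement of $A_t$, being an arc of length $1/2$, must contain one of the points $\phi-nh$ for every $\phi$, so $\bigcap_n(A_t+nh)=\emptyset$. Thus every rotation-invariant subset of $A$ is null. Combining the two cases, all symmetric subsets of $A$ have measure at most $1/4$, so $A$ is perfect (and, incidentally, Lemma~\ref{lem:perfect} is confirmed directly for this $A$).

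I do not expect a genuine obstacle once the right set $A$ is in hand: the reflection case reduces to the tent integral together with equidistribution of the offset $\delta(t)$, and the rotation case to a one-line combinatorial remark about arcs of length exactly $1/2$. The only point requiring care is to remember that $\sym D$ contains the rotations as well as the reflections; for this particular $A$ the rotational part turns out to be vacuous.
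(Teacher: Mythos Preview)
Your proof is correct, and the explicit set you build is precisely the member of the paper's family that corresponds to Fermat's spiral. The paper also passes to the cylinder $\circel\times(0,1]$ and takes every slice $A_v$ to be a half-arc; it shows more generally that \emph{any} choice of half-arcs on the lower half $(0,\tfrac12]$ yields a perfect set once the upper half is defined by $A_{v+1/2}=A_v+\tfrac14$ (your linear choice $A_t=[t/2,t/2+\tfrac12)$ satisfies this automatically). The verification for reflections is where the two arguments genuinely differ. The paper pairs level $v$ with level $v+\tfrac12$ and uses $A_v+\tfrac12=\circel\setminus A_v$ to obtain the identity $\lambda(A_v\cap(g-A_v))+\lambda((A_v+\tfrac14)\cap(g-A_v-\tfrac14))=\lambda(g-A_v)=\tfrac12$, which integrates to $\tfrac14$ for \emph{every} choice of half-arcs. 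Your route instead exploits the specific linear choice: the endpoint offset $\delta(t)=g-t-\tfrac12$ equidistributes over $\circel$ as $t$ runs through $[0,1)$, reducing the computation to the tent integral $\int_0^1|u-\tfrac12|\,du=\tfrac14$. The paper's pairing trick buys an entire family of perfect sets in one stroke; your change of variables is shorter for the single example and, pleasantly, makes the Fermat-spiral connection explicit before it is needed. The handling of rotations (half-arcs admit no nontrivial rotational symmetry) is essentially the same in both proofs.
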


We precede the proof of Theorem \ref{thm:perfect} with a technical suggestion,
that will be useful also in the next section.
Instead of the disk $D$, it will be practical
to deal with the cylinder $C=\circel\times(0,1]$ supplied with the product measure
$\mu=\lambda\times\lambda$. For this purpose we prick out the center from $D$ and establish 
a one-to-one mapping $F\function DC$ preserving measure and symmetry.
We describe a point in $C$ by a pair of coordinates $(u,v)$ with $0<u\le1$ and
$0<v\le1$, whereas for $D$ we use polar coordinates
$(r,\phi)$ with $0<r\le\pi^{-1/2}$ and $0<\phi\le2\pi$. In these coordinate systems,
we define 
$$
F(r,\phi)=(\frac\phi{2\pi},\pi r^2).
$$
Since $F$ is a diffeomorphism, a set $X\subseteq D$ is measurable iff so is $F(X)$.

To see the geometric sense of $F$, notice that this mapping takes a radius of the disk
onto a longitudinal section of the cylinder. Furthermore, a concentric circle of $D$
is taken onto a cross section of $C$ so that the area  below the section
is equal to the area within the circle. It follows that, if a set $X\subseteq D$
is measurable, then $X$ and $F(X)$ have the same measure.

The correspondence $F$ preserves symmetry in the following sense:
For every $s\in\sym D$ there is an $s'\in\sym C$ such that, for any $X\subseteq D$,
we have $s(X)=X$ iff $s'(F(X))=F(X)$. Specifically, the rotation $s'(u,v)=(u+h,v)$
(resp., the reflection $s'(u,v)=(g-u,v)$)
of the cylinder corresponds to the rotation by angle $2\pi h$
(resp., to the reflection in the diameter $\phi=\pi g$) of the disk.

Finally, it will be practical to identify $\circel$ with $(0,1]$ and replace $C$ with
the square $(0,1]\times(0,1]$ regarded as the development of the cylinder on a plane.

\begin{proofof}{Theorem \ref{thm:perfect}}
The argument presented below is different from that used in \cite{Izv,EJC}.

Using the transformation $F$, we prefer to deal with the cylinder $C$.
We will construct a set $A\subset C$ so that $F^{-1}(A)$ is perfect.
This goal will be achieved by ensuring the following properties of~$A$:
\begin{itemize}
\item
$\mu(A)=1/2$.
\item
$A$ contains no subset symmetric with respect to any rotation.
\item
Every subset of $A$ symmetric with respect to a reflection has measure $1/4$.
\end{itemize}

Given $v\in(0,1]$, we will denote $A_v=\setdef{u\in\circel}{(u,v)\in A}$.
In the course of construction of $A$ we will obey the condition that
\begin{equation}\label{eq:shiftsect}
\circel\setminus A_v=A_v+\frac12
\end{equation}
for every $v$ (i.e., each $A_v$ and its centrally symmetric image make up a partition
of the circle). 

The desired set
$A$ will be constructed by parts, first below the cross section $v=1/2$ and then
above it. There is much freedom for the first part $A'=A\,\cap\,\circel\times(0,\frac12]$.
We choose it to be an arbitrary
set with each slice $A_v$ being a continuous semi-interval of length $1/2$, i.e.,
a semicircle in $\circel$.
The other part $A''=A\,\cap\,\circel\times(\frac12,1]$ is obtained by lifting $A'$ and
rotating it by angle $\pi/2$, that is, $A''=A'+(\frac14,\frac12)$. 
Note that \refeq{shiftsect} is therewith satisfied.
The simplest example of
a perfect set obtainable this way is shown in Fig.~\ref{fig:simplest}.

\begin{figure}
\centerline{\includegraphics{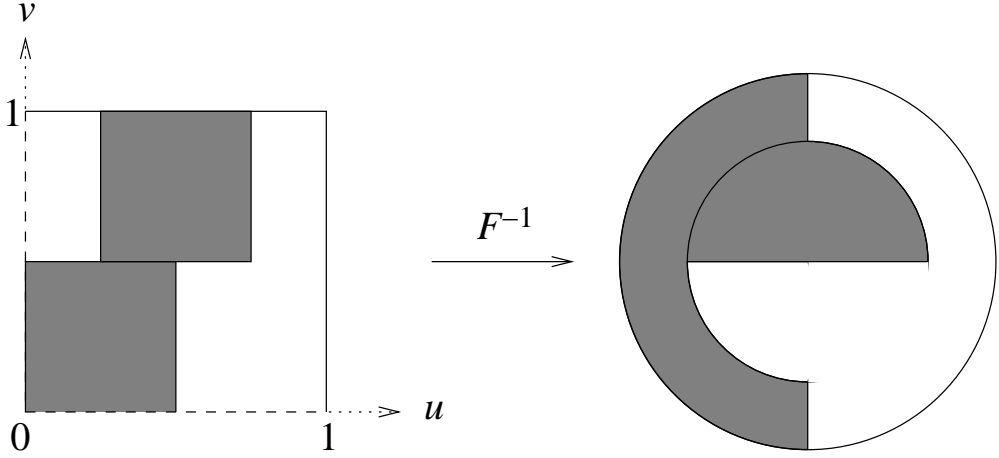}}
\caption{Construction of a perfect set. 
The square shows the cylinder development where we take
$A_v=(0,1/2]$ for $0<v\le1/2$.}
\label{fig:simplest}
\end{figure}

It remains to verify that the $A$ has the proclaimed properties. Since each $A_v$ is a
semicircle, $A$ does not contain any nonempty subset symmetric with respect
to a rotation. Consider now a reflection $s_g(u,v)=(g-u,v)$ and compute the
measure of the maximum subset of $A$ symmetric with respect to $s_g$:
\begin{multline*}
\mu(A\cap s_gA)=\int_0^1\lambda(A_v\cap(g-A_v))\,dv\\
=\int_0^{1/2}\lambda(A_v\cap(g-A_v))+\lambda((\frac14+A_v)\cap(g-\frac14-A_v))\,dv.
\end{multline*}
Note that $\lambda((\frac14+A_v)\cap(g-\frac14-A_v))=\lambda((\frac12+A_v)\cap(g-A_v))$.
Taking into account \refeq{shiftsect}, we conclude that
$$
\mu(A\cap s_gA)=\int_0^{1/2}\lambda(A_v\cap(g-A_v))+
\lambda((\circel\setminus A_v)\cap(g-A_v))\,dv
=\int_0^{1/2}\lambda(g-A_v)\,dv=\frac14.
$$
The proof is complete.
\end{proofof}

\section{\mbox{Establishing the borderline between Yin and Yang}}\label{s:yy}

Here we prove Theorem~\ref{thm:yy}.
The proof essentially consists of Lemmas \ref{lem:alpha} and \ref{main1}
and is given in Section \ref{ss:proofthm}. The proof of Lemma \ref{main1}
is postponed to Section \ref{ss:prooflem} with the necessary preliminaries
outlined in Section~\ref{ss:background}.

\subsection{Proof of Theorem \protect\ref{thm:yy}}\label{ss:proofthm}

Assuming that $\beta$ satisfies axioms \refax{cong}--\refax{rad}, we take
into consideration another curve $\alpha$ associated with $\beta$ in a
natural way.
It easily follows from \refax{cong} and \refax{conc} that $\beta$ is symmetric
with respect to the center $O$ of the disk $D$ and that $O$ splits 
$\beta$ into two congruent branches, say, $\beta_1$ and $\beta_2$. 
We will use the transformation $F\function{D\setminus\{O\}}{(0,1]^2}$ described before the proof
of Theorem \ref{thm:perfect}, where the square $(0,1]^2$ is thought of as the  development
of the cylinder $C=\circel\times(0,1]$. Let $\alpha$ be the image of $\beta_1$ under $F$. 
By \refax{rad}, this curve can be considered the graph of a function
$v=\alpha(u)$, where $u$ ranges in a semi-interval of length $1/2$. 
Shifting, if necessary, the coordinate system and appropriately choosing the positive
direction of the $u$-axis, we suppose that $\alpha$ is defined 
for $0<u\le1/2$ and hence $\lim_{u\to0}\alpha(u)=0$. 
Since $\beta_1$ goes from the center to a peripheral point,
we have $\alpha(1/2)=1$. Since $\beta_1$ crosses each concentric circle exactly once,
$\alpha$ is a monotonically increasing function. Note that the image of $\beta_2$ under $F$
is obtained from $\alpha$ by translation in $1/2$, that is, it is the graph of the function
$v=\alpha(u-1/2)$ on $1/2<u\le1$ (see the left square in Fig.~\ref{fig:alpha}).

\begin{lemma}\label{lem:alpha}
Suppose that $\beta$ satisfies \refax{cong}--\refax{rad} and let $\alpha$ be
the associated curve. Then $\beta$ satisfies \refax{perf} if and only if
\begin{equation}\label{eq:alal}
\alpha(u+\frac14)=\alpha(u)+\frac12\mathrm{\ for\ all\ }0<u\le\frac14.
\end{equation}
\end{lemma}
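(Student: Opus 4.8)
The key is to translate the geometric condition (that $\beta$ cuts $D$ into perfect sets) into the functional equation \refeq{alal} via the measure-and-symmetry-preserving transformation $F$, and here Lemma~\ref{lem:perfect} does most of the work: $\beta$ satisfies \refax{perf} exactly when for every reflective symmetry $s$ of $D$ the largest $s$-symmetric subset of one of the two halves has measure $1/4$. Writing $A$ for the image under $F$ of the ``Yin'' half of $D$ bounded by $\beta_1,\beta_2$, the monotone function $\alpha$ describes $A$ slice-wise: the section $A_v=\setdef{u\in\circel}{(u,v)\in A}$ is the arc $(\alpha^{-1}(v),\,\alpha^{-1}(v)+\tfrac12]$ (read modulo $1$), because $\alpha$ on $(0,\tfrac12]$ traces $\beta_1$ and its translate by $\tfrac12$ traces $\beta_2$. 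The first thing I would record is that this already forces the central-symmetry relation $\circel\setminus A_v=A_v+\tfrac12$, i.e.\ \refeq{shiftsect}, for every $v$; this is the algebraic shadow of the fact that $\beta$ (hence $A$) is centrally symmetric, and it will be used repeatedly.

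Next I would compute $\mu(A\cap s_gA)$ as a function of the reflection parameter $g$, exactly as in the proof of Theorem~\ref{thm:perfect}. Using $\mu(A\cap s_gA)=\int_0^1\lambda(A_v\cap(g-A_v))\,dv$ and the fact that each $A_v$ is a half-circle, the inner quantity $\lambda(A_v\cap(g-A_v))$ is the overlap of two semicircular arcs; a direct computation gives $\lambda(A_v\cap(g-A_v))=\bigl|\,\tfrac12-\{\,g/2-\alpha^{-1}(v)\,\}\,\bigr|$ or a similar piecewise-linear expression (the precise normalization of the ``tent'' I would pin down by checking $g=0$, where the overlap is the whole semicircle, of measure $\tfrac12$). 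The point is that it depends on $v$ only through $t=\alpha^{-1}(v)\in(0,\tfrac12]$, and that substituting $t=\alpha^{-1}(v)$, $dv=\alpha'(t)\,dt$ turns the integral into $\int_0^{1/2}\Lambda(g,t)\,\alpha'(t)\,dt$ with an explicit tent-function $\Lambda$. By Lemma~\ref{lem:perfect}, $\beta$ is perfect iff this integral equals $\tfrac14$ for \emph{every} $g$.

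The heart of the argument is then to show that ``$\int_0^{1/2}\Lambda(g,t)\,\alpha'(t)\,dt\equiv\tfrac14$ in $g$'' is equivalent to \refeq{alal}. For the easy direction, assume \refeq{alal}: then $\alpha^{-1}(v+\tfrac12)=\alpha^{-1}(v)+\tfrac14$, so the portion of the $t$-range coming from $v\in(\tfrac12,1]$ is the translate by $\tfrac14$ of the portion from $v\in(0,\tfrac12]$, and splitting the integral at $v=\tfrac12$ reproduces verbatim the cancellation in the proof of Theorem~\ref{thm:perfect} (the term from the upper half becomes $\lambda((\circel\setminus A_v)\cap(g-A_v))$, which together with $\lambda(A_v\cap(g-A_v))$ sums to $\lambda(g-A_v)=\tfrac12$, giving $\int_0^{1/2}\tfrac12\,dv=\tfrac14$). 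For the converse — the step I expect to be the main obstacle — I would argue that the map $\alpha\mapsto\bigl(g\mapsto\mu(A\cap s_gA)-\tfrac14\bigr)$ is, after a Fourier expansion in $g$, injective enough to conclude: expanding $\chi_{A_v}$ on $\circel$ as in the proof of Theorem~\ref{thm:SL}, the characteristic function of the semicircle $A_v$ has Fourier coefficients $c_n(v)=\tfrac12\,\delta_{n,0}+(\text{explicit }e^{-2\pi i n\alpha^{-1}(v)}\text{ terms for odd }n)$, so $\mu(A\cap s_gA)=\sum_n e^{2\pi i n g}\int_0^1 c_n(v)^2\,dv$ and the condition $\mu(A\cap s_gA)\equiv\tfrac14$ says precisely $\int_0^1 c_n(v)^2\,dv=0$ for all $n\ne0$. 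For odd $n$ this reads $\int_0^1 e^{-4\pi i n\alpha^{-1}(v)}\,dv=0$; changing variables to $t=\alpha^{-1}(v)$ and using \refeq{shiftsect} to fold the $t\in(\tfrac14,\tfrac12]$ part onto $t\in(0,\tfrac14]$, this becomes an identity forcing $\alpha'$ to satisfy the translation relation equivalent to \refeq{alal}. I would carry out this Fourier bookkeeping carefully for $n=\pm1$, which already pins down the first harmonic, and note that monotonicity of $\alpha$ (so that $\alpha^{-1}$ is a genuine change of variable) is what makes the substitution legitimate; the higher harmonics are then automatically consistent. Wrapping up, combining both directions yields the stated equivalence.
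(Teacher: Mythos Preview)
Your forward direction (assuming \refeq{alal} and reproducing the cancellation from the proof of Theorem~\ref{thm:perfect}) matches the paper exactly. For the converse you take a genuinely different route. The paper avoids Fourier analysis altogether: it introduces $\bar\alpha(u)=\alpha(u/2)$, rewrites $\lambda(A_v\cap s_gA_v)=\lambda\bigl((2\alpha^{-1}(v)+H)\cap(g-H)\bigr)$ with $H=[0,\tfrac12]$, and thereby represents $\mu(A\cap s_gA)=\int_{g-1/2}^{g}m(u)\,du$ for an explicit $m$ built from $\bar\alpha$. Constancy in $g$ then yields $m(g)=m(g-\tfrac12)$ by differentiation, and unwinding $m$ gives \refeq{alal} directly. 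Your Fourier approach is a legitimate alternative that fits the machinery of Theorem~\ref{thm:SL}; the paper's route is more elementary and needs no spectral input.

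There is, however, a gap in your last step. After the change of variable $t=\alpha^{-1}(v)$ (which under \refax{cong}--\refax{rad} alone should be read as a Stieltjes substitution, $\int_0^{1/2}e^{-4\pi i n t}\,d\alpha(t)=0$, since $\alpha$ need not be differentiable), the condition you obtain says exactly that the odd Fourier--Stieltjes coefficients of the measure $d\alpha$ on the circle $\reals/\tfrac12\integers$ vanish. The way to finish is to observe that this forces $d\alpha$ to be invariant under the $\tfrac14$-shift, hence $\alpha(u+\tfrac14)-\alpha(u)$ is constant, and the boundary data $\alpha(0^+)=0$, $\alpha(\tfrac12)=1$ pin the constant to $\tfrac12$. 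Your plan instead proposes to work out $n=\pm1$ and asserts that ``the higher harmonics are then automatically consistent''; but a single odd $n$ imposes only one complex linear constraint on $d\alpha$ and cannot by itself force $\tfrac14$-periodicity---you genuinely need the whole family of odd $n$ together with uniqueness of Fourier coefficients. The appeal to \refeq{shiftsect} for ``folding'' is also misplaced: \refeq{shiftsect} records that each $A_v$ is a half-circle, a fact you have already used in computing $c_n(v)$, and it supplies no additional relation between $\alpha$ on $(0,\tfrac14]$ and on $(\tfrac14,\tfrac12]$.
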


\begin{proof}
The curve $\alpha$ and its shift in angle $\pi$ split the cylinder $C$ into two 
congruent parts, say, $A$ and $B$. Those are the images under $F$ of the two 
parts into which $D$ is split by $\beta$.

If \refeq{alal} holds true, then $A$ can be considered an instance of the
general construction described in the proof of Lemma \ref{lem:perfect}.
It follows that $F^{-1}(A)$ and $F^{-1}(B)$ are perfect sets, that is, \refax{perf}
is fulfilled.

Conversely, assume \refax{perf}. Consider the development of the cylinder
onto the square and suppose that
$A$ is the area in $(0,1]^2$ between $\alpha$ and its shift in $1/2$.
Introduce first some notation.
Given $v\in(0,1]$, let $A_v=\setdef{u}{(u,v)\in A}$. Note that $A_v$ is the segment with
endpoints $\alpha^{-1}(v)$ and $\alpha^{-1}(v)+1/2$ and we can sometimes write
$A_v=\alpha^{-1}(v)+H$, where $H=[0,\frac12]$.

Furthermore, denote $\bar\alpha(u)=\alpha(u/2)$, $0<u\le1$, and let $\bar A$ be the area of 
the cylinder $C$ bounded by the graph of $\bar\alpha$ and its image
under rotation by angle $\pi$. In the development of $C$ this area looks as shown
on the right square in Fig.\ \ref{fig:alpha}.
Given $u\in(0,1]$, let $\bar A_u=\setdef{v}{(u,v)\in\bar A}$ and $m(u)=\lambda(\bar A_u)$.
If $0<u\le1/2$, we have
\begin{equation}\label{eq:m}
\begin{array}{rcl}
m(u)&=&\bar\alpha(u)+(1-\bar\alpha(u+\frac12)),\\[2mm]
m(u+\frac12)&=&\bar\alpha(u+\frac12)-\bar\alpha(u).
\end{array}
\end{equation}

\begin{figure}
\centerline{\includegraphics{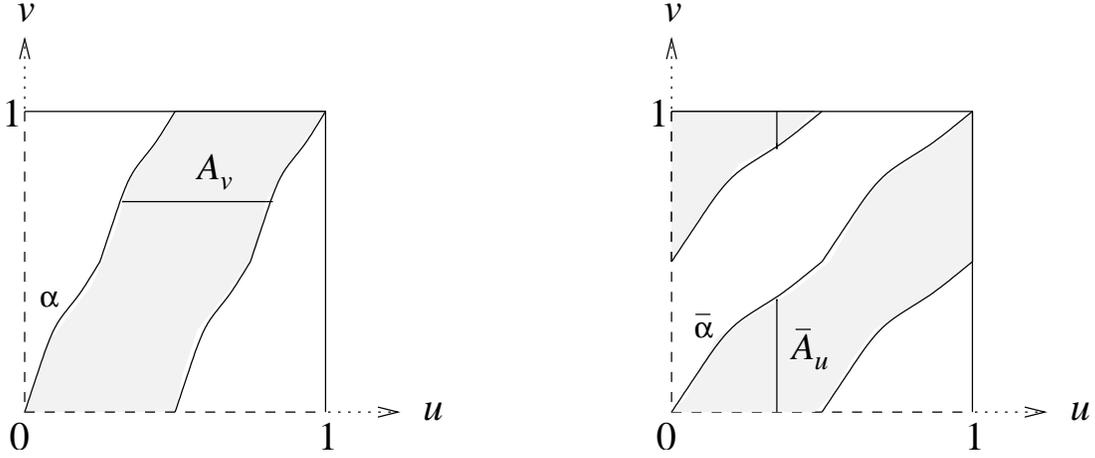}}
\caption{Proof of Lemma \protect\ref{lem:alpha}: areas $A$ and $\bar A$ in the cylinder $C$.}
\label{fig:alpha}
\end{figure}

We are now prepared to prove \refeq{alal}. By \refax{perf} and Lemma \ref{lem:perfect},
we have
$$
\mu(A\cap s_gA)=\frac14
$$
for any reflection $s_g(u,v)=(g-u,v)$, where $\mu=\lambda\times\lambda$
is the measure on $(0,1]^2$.
Observe that
\begin{multline*}
\lambda(A_v\cap s_gA_v)=\lambda((\alpha^{-1}(v)+H)\cap(-\alpha^{-1}(v)+g-H))\\[1mm]
=\lambda((2\alpha^{-1}(v)+H)\cap(g-H))=\lambda((\bar\alpha^{-1}(v)+H)\cap(g-H))
\end{multline*}
Suppose that $1/2<g\le1$ and continue this chain of equalities:
$$
\lambda(A_v\cap s_gA_v)
=\int_{g-1/2}^g\chi_{\bar\alpha^{-1}(v)+H}(u)\,du=
\int_{g-1/2}^g\chi_{\bar A}(u,v)\,du=\int_{g-1/2}^g\chi_{\bar A_u}(v)\,du.
$$
Using it, we obtain
\begin{multline*}
\mu(A\cap s_gA)=\int_0^1\lambda(A_v\cap s_gA_v)\,dv=
\int_0^1\int_{g-1/2}^g
\chi_{\bar A_u}(v)
\,dudv  \\
=\int_{g-1/2}^g\int_0^1
\chi_{\bar A_u}(v)
\,dvdu
=\int_{g-1/2}^gm(u)\,du.
\end{multline*}
Since $\mu(A\cap s_gA)$ is a constant function of $g$, 
differentiation in $g$ gives us the identity
$m(g)-m(g-1/2)=0$, that is,
\begin{equation}\label{eq:mm}
m(u)=m(u+\frac12)\mathrm{\ for\ all\ }0<u\le\frac12.
\end{equation}
Substituting from \refeq{m}, we get
$$
2\bar\alpha(u+\frac12)=2\bar\alpha(u)+1.
$$
In particular, for all $0<u\le1/4$ we have
$$
\bar\alpha(2u+\frac12)=\bar\alpha(2u)+\frac12,
$$
which in terms of $\alpha$ is exactly what was claimed.
\end{proof}

The simplest curve satisfying \refeq{alal} is the line $v=2u$, $0\le u\le1/2$.
As easily seen, under the application of $F^{-1}$ this line and its shift
$v=2u-1$, $1/2\le u\le1$, are transformed into Fermat's spiral $\pi^2r^2=\phi$, $0\le\phi\le\pi$.
It follows that Fermat's spiral, which clearly satisfies \refax{cong}--\refax{rad},
also satisfies \refax{perf}. Note that \refax{smooth} and \refax{alg} hold true
as well: Fermat's spiral is explicitly given by an algebraic relation in polar
coordinates and is smooth, for example, due to parameterization
$$
\left\{
\begin{array}{lll}
x&=&\textstyle\frac1\pi\, t\cos t^2,\\[1.5mm]
y&=&\textstyle\frac1\pi\, t\sin t^2.
\end{array}
\right.
$$

To prove Theorem \ref{thm:yy}, we hence have to prove that, if a curve $\beta$
satisfies \refax{cong}--\refax{alg}, then it coincides, up to congruence,
with Fermat's spiral. It suffices to prove that the associated curve $\alpha$
is a straight line segment.

Since $\beta$ is smooth by \refax{smooth} and $\alpha$ is a smooth transformation
of a part of $\beta$, we conclude that $\alpha$ is smooth as well. 
As follows from \refax{alg}
and the definition of transformation $F$, all points of $\alpha$ satisfy relation
$P(u,\sqrt v)=0$ for a nonzero bivariate polynomial $P$. 
Since this relation can be rewritten
in the form $\sqrt v\, Q(u,v)+R(u,v)=0$ for some polynomials $Q$ and $R$, the points
of $\alpha$ satisfy polynomial relation $v\,Q^2(u,v)-R^2(u,v)=0$.

Thus, $\alpha$ is a smooth algebraic curve. By Lemma \ref{lem:alpha}, $\alpha$
satisfies relation \refeq{alal}. This means that $\alpha$ has infinite
intersection with its shift $\alpha+(\frac14,\frac12)$. Theorem \ref{thm:yy}
immediately follows from the following fact.

\begin{lemma}\label{main1} 
If a smooth algebraic curve $\gamma\subset\reals^2$ 
has infinite intersection with its shift in a nonzero vector $(a,b)$,
then $\gamma$ is contained in a line.
\end{lemma}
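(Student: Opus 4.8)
The plan is to translate the hypothesis into a statement about intersections of algebraic plane curves and then read off a line. Fix a $C^\infty$ parameterization $\phi\function{I}{\reals^2}$ of $\gamma$ with nowhere-vanishing derivative, and a nonzero $P$ with $\gamma$ contained in its zero locus; passing to the complex zero set and factoring $P$ over $\complex$, I would write $V(P)$ as a union of finitely many irreducible plane curves $V(p_1),\dots,V(p_m)$.

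The first step --- which I expect to be the main obstacle --- is to show that $\gamma$ lies inside a single component, say $V(p)$. For $i\neq j$ the set $V(p_i)\cap V(p_j)$ is finite by B\'ezout, and so is its $\phi$-preimage (a $C^\infty$ arc with non-vanishing derivative cannot meet a point infinitely often). Hence off a finite set $B\subset I$ the point $\phi(s)$ lies on exactly one component, and a neighbourhood argument shows the label of that component is locally constant on $I\setminus B$; what must be excluded is a jump across some $b\in B$. Were $\gamma$ to pass from a branch of one irreducible curve to a branch of another at $q^*:=\phi(b)$, I would reparameterize $\gamma$ near $q^*$ by the coordinate along its tangent line there (legitimate since $\phi'(b)\neq0$), presenting $\gamma$ locally as the graph of a $C^\infty$ function $k$ with $k(0)=0$ that agrees with one branch for negative argument and with another for positive argument. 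A branch of an algebraic curve has a convergent Puiseux expansion, and such an expansion can match one-sided derivatives of all orders only if it is an ordinary power series; so both one-sided expansions of $k$ are power series, and, being the Taylor series of a single $C^\infty$ function, they must coincide --- whence the two branches agree on a one-dimensional set, contradicting the finiteness of the intersection of the two curves. This forces all labels to agree, i.e.\ $\gamma\subseteq V(p)$.

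Granting this, the conclusion follows by two applications of B\'ezout. The translate $\gamma+(a,b)$ lies in $V(p_{a,b})$ with $p_{a,b}(x,y)=p(x-a,y-b)$ again irreducible, and $\gamma\cap(\gamma+(a,b))\subseteq V(p)\cap V(p_{a,b})$ is infinite; distinct irreducible plane curves meet in a finite set, so $V(p)=V(p_{a,b})$ and thus $V(p)$ is invariant under translation by $(a,b)$. Now fix $q\in\gamma\subseteq V(p)$: the progression $q+n(a,b)$, $n\in\integers$, lies in $V(p)$, so the line $\ell$ through $q$ in direction $(a,b)$ meets the irreducible curve $V(p)$ in infinitely many points; since a line not contained in a curve of degree $d$ meets it in at most $d$ points, $\ell\subseteq V(p)$, and as $V(p)$ is irreducible, $V(p)=\ell$. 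Therefore $\gamma\subseteq\ell$.

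The delicate part throughout is the use of \refax{smooth}: it is precisely $C^\infty$-smoothness --- not $C^1$, nor even $C^k$ for any finite $k$ --- that prevents $\gamma$ from being glued together tangentially out of arcs of distinct algebraic components, and the argument for this rests on the local analytic (Puiseux) structure of algebraic branches together with the fact that a $C^\infty$ function agreeing with a power series on one side of a point agrees with it to infinite order. Once the single-component reduction is secured, the remaining steps are classical plane-curve intersection theory.
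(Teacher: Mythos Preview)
Your proposal is correct and follows essentially the same route as the paper: reduce to a single irreducible component via the local Puiseux structure of algebraic branches together with $C^\infty$-smoothness forcing integer exponents (the paper packages this as ``smooth algebraic $\Rightarrow$ analytic'' in Lemma~\ref{l3} and ``analytic algebraic $\Rightarrow$ irreducibly algebraic'' in Lemma~\ref{l4}), and then apply B\'ezout twice exactly as you do. Your organization is slightly more economical---you invoke the Puiseux argument only at the finitely many potential jump points rather than proving analyticity everywhere---but the analytic content and the concluding B\'ezout steps are identical.
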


The proof of Lemma \ref{main1} takes the rest of this section.

\subsection{Background on smooth algebraic curves}\label{ss:background}

In what follows we suppose that a curve $\gamma\subset\reals^2$ is the image
of a real interval under an injective continuous map
(the forthcoming considerations apply to $\alpha$, which is the image
of a semi-interval, after removal from $\alpha$ its endpoint).
Recall that $\gamma$ is \emph{smooth} if it has an infinitely differentiable
parameterization $\gamma\function{(0,1)}{\reals^2}$ with nonvanishing derivative.
A $\gamma$ is \emph{algebraic} if every point $(x,y)\in\gamma$ satisfies
relation $P(x,y)=0$ for a nonzero polynomial $P$ with real coefficients. 
The \emph{degree} of $\gamma$
is the minimum possible degree of such a $P$. If $P$ is irreducible,
i.e., does not admit factorization $P(x,y)=Q(x,y)R(x,y)$ with $Q$ and $R$
both being nonconstant real polynomials, then $\gamma$ is called 
\emph{irreducibly algebraic}.

\begin{theorem}[B\'ezout, see, e.g., \cite{Kir}]
Suppose that $P_1$ and $P_2$ are distinct bivariate polynomials of degree $m$ and $n$,
respectively. Let $\rho_i=\setdef{(x,y)\in\reals^2}{P_i(x,y)=0}$, $i=1,2$. 
If both $P_1$ and $P_2$
are irreducible, then $\rho_1$ and $\rho_2$ have at most $mn$ points in common.
\end{theorem}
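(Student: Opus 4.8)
The plan is to bound the number of real affine intersection points by the theory of resultants, which reduces the two-variable incidence count to the root count of a single-variable polynomial. Throughout I regard $P_1$ and $P_2$ as elements of $\reals[x][y]$, that is, as polynomials in $y$ whose coefficients are polynomials in $x$. Since $P_1$ and $P_2$ are distinct and irreducible, neither is a constant multiple of the other and hence neither divides the other, so they are relatively prime in $\reals[x,y]$. The resultant $R(x)=\mathrm{Res}_y(P_1,P_2)\in\reals[x]$ will be the central object: its roots will capture the $x$-coordinates of all common zeros, and its degree will be the source of the bound $mn$.

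Before forming the resultant I would record finiteness and normalize coordinates. Two coprime bivariate polynomials cannot vanish simultaneously along an infinite set, so $\rho_1\cap\rho_2$ is finite; knowing this, I apply a generic rotation of the plane, which preserves total degrees, irreducibility, relative primality, and the cardinality $|\rho_1\cap\rho_2|$, so the assertion is invariant under it. For all but finitely many directions (those avoiding the points at infinity of the two curves) the rotated $P_1$ and $P_2$ both acquire positive $y$-degree with leading $y$-coefficient a nonzero constant; and for all but finitely many directions the finitely many intersection points acquire pairwise distinct $x$-coordinates. I choose a direction avoiding both exceptional sets. Once both polynomials have positive $y$-degree, Gauss's lemma promotes their coprimality in $\reals[x,y]$ to coprimality over the field $\reals(x)$, so $R\not\equiv0$.

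With coordinates so chosen, two facts finish the argument. First, every common zero projects to a root of $R$: if $(x_0,y_0)\in\rho_1\cap\rho_2$ then $P_1(x_0,\cdot)$ and $P_2(x_0,\cdot)$ share the root $y_0$, and because the leading $y$-coefficients are nonzero constants the specialization $x\mapsto x_0$ does not lower the $y$-degree, so $R(x_0)=\mathrm{Res}_y\of{P_1(x_0,\cdot),P_2(x_0,\cdot)}=0$. Second, $\deg_x R\le mn$: writing $P_1=\sum_i a_i(x)y^i$ and $P_2=\sum_j b_j(x)y^j$, one has $\deg_x a_i\le m-i$ and $\deg_x b_j\le n-j$, and reading $\deg_x R$ off the Sylvester determinant by a weighted count of the $x$-degrees contributed along each permutation yields the uniform bound $mn$. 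Combining the two facts, the pairwise distinct $x$-coordinates of the intersection points are distinct roots of $R$, whence $|\rho_1\cap\rho_2|\le\deg_x R\le mn$.

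The main obstacle is the degree estimate $\deg_x R\le mn$, together with the bookkeeping that makes the specialization in the first fact legitimate. The degree bound is a genuine computation on the Sylvester matrix rather than a formality, and it is essential that the single generic rotation be arranged to do double duty: it must simultaneously separate the $x$-coordinates of the (already known to be finitely many) intersection points and force the leading $y$-coefficients to be nonzero constants, since the projection-to-roots step and the non-degeneracy of the specialization both rely on the latter while the final counting relies on the former.
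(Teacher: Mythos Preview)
The paper does not prove this theorem; it is quoted as a classical result with a citation to Kirwan's book, so there is no in-paper argument to compare yours against.

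Your resultant-based argument is the standard elementary route and is sound. The key steps---coprimality in $\reals(x)[y]$ via Gauss's lemma giving $R\not\equiv0$, the specialization $R(x_0)=\mathrm{Res}_y\bigl(P_1(x_0,\cdot),P_2(x_0,\cdot)\bigr)$ justified by the constant leading $y$-coefficients, and the permutation-by-permutation degree count on the Sylvester determinant yielding $\deg_x R\le mn$---are all correct as you describe them.

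Two small points worth tightening. First, ``distinct and irreducible'' does not literally exclude $P_2=cP_1$ with $c\ne1$, in which case $\rho_1=\rho_2$ may well be infinite; the theorem as stated tacitly means non-associate irreducibles, and you should assume that rather than claim to deduce it. Second, you invoke finiteness of $\rho_1\cap\rho_2$ \emph{before} forming the resultant, in order to pick a rotation separating the $x$-coordinates; but finiteness is itself most cleanly obtained from $R\not\equiv0$. A non-circular ordering is: rotate once to avoid the finitely many directions through points at infinity (this alone makes both leading $y$-coefficients nonzero constants), form $R$, conclude $R\not\equiv0$ and hence that the intersection is finite, and only then rotate again (avoiding finitely many further directions) to separate the $x$-coordinates. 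With that reordering the argument is complete.
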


A function $f\function\reals\reals$ (resp., $f\function\complex\complex$) is called
\emph{analytic at point} $x_0$ if in some neighborhood of
$x_0$ the function develops in a power series
$$
f(x) = \sum_{n=0}^\infty a_n ( x-x_0 )^n
$$
with real (resp., complex) coefficients $a_n$.
Furthermore, we call $f$ \emph{analytic on an open set} $U$ if
it is analytic at every $x_0\in U$.
It can be showed that, if $f$ is analytic at a point $x_0$, then it is analytic
on a neighborhood of $x_0$.
The following noticeable fact applies to both real and complex analytic functions.

\begin{theorem}[The Uniqueness Theorem, see, e.g., \cite{NNi}]
Let functions $f$ and $g$ be analytic on a connected set $U$.
Suppose that the set $\setdef{x\in U}{f(x)=g(x)}$ has an accumulation point inside $U$.
Then $f(x)=g(x)$ everywhere on~$U$.
\end{theorem}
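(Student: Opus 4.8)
The plan is to place $\gamma$ inside a single \emph{irreducible} algebraic curve $V$ and then to invoke B\'ezout's theorem twice: first on the pair $V$, $V+(a,b)$ to force these two curves to coincide, and then on the pair $V$, $\ell$ (for a suitable line $\ell$) to force $V$ itself to be a line. The smoothness hypothesis and the Uniqueness Theorem enter only in the first reduction, and that reduction is the sole delicate point; the two B\'ezout arguments are short and purely geometric, and they do not even require the infinite intersection to accumulate at a finite point.

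First I would take a nonzero polynomial $P$ of minimal degree with $\gamma\subseteq\{P=0\}$. Minimality makes $P$ squarefree (a repeated factor could be discarded), so the system $P=P_x=P_y=0$ has only finitely many solutions, and off this finite singular set $\{P=0\}$ is a real-analytic one-manifold, each of whose arcs lies in exactly one irreducible component of $\{P=0\}$. I claim $\gamma$ lies in the zero set of a single irreducible factor $g$ of $P$. Indeed, between consecutive parameter values at which $\gamma$ meets the singular set (finitely many such values) the curve runs through regular points; near such a point $\{P=0\}$ is a single analytic branch, and since $\gamma$ has nonvanishing derivative its image is, by invariance of domain, a relatively open subarc of that branch, so $\gamma$ locally coincides with one irreducible component, and on a connected subinterval that component is constant. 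The crux is that $\gamma$ cannot switch components at a singular parameter value: in a graph coordinate the two candidate analytic branches through the contact point would agree with the $C^\infty$ curve to all orders, hence agree to infinite order, hence by the Uniqueness Theorem coincide identically — contradicting that they belong to distinct components. This propagation across contact points is the main obstacle, and it is exactly where $C^\infty$ smoothness is indispensable, since a merely $C^1$ curve can pass from one branch onto a tangent one. The outcome is $\gamma\subseteq V:=\{g=0\}$ with $g$ irreducible.

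Now the translate obeys $\gamma+(a,b)\subseteq V+(a,b)=\{g(x-a,y-b)=0\}$, whose defining polynomial is again irreducible because translation is an automorphism of the polynomial ring. Every point of $\gamma\cap\bigl(\gamma+(a,b)\bigr)$ lies in $V\cap\bigl(V+(a,b)\bigr)$, so this last set is infinite; were the two irreducible curves distinct, B\'ezout would bound their intersection by $(\deg g)^2$. Hence $V+(a,b)=V$, and iterating gives $V+n(a,b)=V$ for every $n\in\integers$. Choosing any $p\in\gamma$, the infinitely many distinct points $p+n(a,b)$ then all lie in $V$ and on the line $\ell$ through $p$ with direction $(a,b)$. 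A second application of B\'ezout, to $\ell$ (a degree-one irreducible) and $g$, forces $\ell=V$. Therefore $V$ is a line and $\gamma\subseteq V$ is contained in a line, which is exactly the assertion.

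I expect the only real work to be the component-reduction of the second paragraph; the passages through $\gamma$'s finitely many singular contacts, where the Uniqueness Theorem rules out branch-switching, are what must be argued carefully, whereas the two B\'ezout steps are essentially immediate once $\gamma$ is known to sit in one irreducible curve.
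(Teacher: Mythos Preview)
Your proposal does not address the stated theorem at all. The statement you were asked to prove is the classical Uniqueness Theorem for analytic functions, which the paper merely \emph{cites} (with a reference) and does not prove. What you have written is instead a proof sketch for a different result in the paper, namely Lemma~\ref{main1}: that a smooth algebraic curve with infinite intersection with a nonzero translate of itself is contained in a line. Your argument even \emph{invokes} the Uniqueness Theorem as a tool (``hence by the Uniqueness Theorem coincide identically''), so it cannot possibly serve as a proof of that theorem. This is a basic mismatch between the target statement and the argument offered.

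If one reads your proposal as an attempt at Lemma~\ref{main1}, the overall architecture matches the paper's: reduce to the case where $\gamma$ lies in an irreducible algebraic curve, then apply B\'ezout twice. The paper carries out the reduction by first proving that a smooth algebraic curve is analytic (via a Puiseux-type result, Theorem~\ref{FL}) and then that an analytic algebraic curve is irreducibly algebraic (a closed-and-open argument using the Uniqueness Theorem). Your reduction is more direct but correspondingly less complete: the claim that at a singular contact point there exist well-defined \emph{analytic} branches of $\{P=0\}$ whose Taylor expansions can be compared with that of $\gamma$ is precisely the content that the paper obtains from Theorem~\ref{FL}, and you have assumed it without justification. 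Likewise, the appeal to ``invariance of domain'' to conclude that $\gamma$ locally fills an arc of the regular locus needs the regular locus to be a one-manifold and $\gamma$ to land in it, which you assert but do not argue. So even as a proof of Lemma~\ref{main1}, the sketch has real gaps at the branch-analysis step.
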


We will need also the following result.

\begin{theorem}[\cite{FL}]\label{FL}
Let $P(z,w)$ be a bivariate complex polynomial that essentially depends on $w$.
Suppose that $P(0,0)=0$. Then there are a positive integer $m$ and
complex functions $f_1(z),\dots,f_k(z)$ analytic on a neighborhood of zero $U\subset\complex$ 
such that, for each nonnegative real $t\in U$, every root of the univariate 
polynomial $P(t,\cdot)$
in $U$ is equal to $f_i(\kern-2pt\sqrt[m]{t})$ for some $i\le k$.
\end{theorem}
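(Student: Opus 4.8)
The plan is to prove the statement as a version of Puiseux's theorem, obtained by tracking the roots of $P(z,\cdot)$ that stay close to $w=0$ as $z\to0$ and analysing how they are permuted when $z$ winds once around the origin. Write $n=\deg_w P\ge1$ and $P(z,w)=a_n(z)w^n+\cdots+a_0(z)$ with $a_n\not\equiv0$. Replacing $P$ by $P/\gcd(P,\partial_wP)$ (the gcd taken in $\complex(z)[w]$ and denominators cleared) does not change the set of $w$-roots for $z$ outside a finite set, so I may assume that $P(z,\cdot)$ has no repeated roots for generic $z$; equivalently, its discriminant $\Delta(z)$ with respect to $w$ is a nonzero polynomial. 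Since $\Delta$ and $a_n$ have only finitely many zeros, I can fix $\epsilon>0$ so that $0$ is the only possible zero of $\Delta(z)\,a_n(z)$ in the disk $\{|z|<\epsilon\}$. Because $P(0,0)=0$, the value $w=0$ is a root of $P(0,\cdot)$; let $\ell\ge1$ be the number of distinct roots tending to it. By continuity of roots (Rouch\'e's theorem), after shrinking $\epsilon$ there is a radius $\eta>0$ such that for every $z$ with $0<|z|<\epsilon$ the polynomial $P(z,\cdot)$ has exactly $\ell$ simple roots in $\{|w|<\eta\}$, all tending to $0$ as $z\to0$. For $U$ small enough these are exactly the roots I must describe.

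Next I would set up the monodromy. On the punctured disk $D^*=\{0<|z|<\epsilon\}$ the $\ell$ selected roots are simple, so by the implicit function theorem each extends locally to an analytic function of $z$; analytic continuation along a loop generating $\pi_1(D^*)\cong\integers$ therefore induces a permutation $\sigma$ of this set of $\ell$ branches. I decompose $\sigma$ into disjoint cycles and treat one cycle $C$ of length $d$ at a time. Introducing the uniformizing variable $\zeta$ with $z=\zeta^d$ maps $\{0<|\zeta|<\epsilon^{1/d}\}$ onto $D^*$ so that one loop in $\zeta$ covers $d$ loops in $z$. Pulling a branch of $C$ back through $z=\zeta^d$ gives a function that returns to itself after one loop in $\zeta$, since $\sigma^d$ fixes $C$ pointwise; hence it is a single-valued analytic function $g(\zeta)$ on $\{0<|\zeta|<\epsilon^{1/d}\}$ satisfying $P(\zeta^d,g(\zeta))=0$.

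The key step is to extend $g$ across $\zeta=0$. Since the tracked roots satisfy $|g(\zeta)|<\eta$ and tend to $0$ as $\zeta\to0$, the function $g$ is bounded near the origin, so by Riemann's removable-singularity theorem it continues to an analytic function on the full disk $\{|\zeta|<\epsilon^{1/d}\}$ with $g(0)=0$. Letting $\zeta$ run over the $d$ distinct $d$-th roots of a given $z$ then recovers the $d$ roots forming the cycle $C$, namely $g(\omega^j z^{1/d})$ with $\omega=e^{2\pi i/d}$ and $j=0,\dots,d-1$. Finally I would unify the radicals: let $d_1,\dots,d_r$ be the cycle lengths and $m$ their least common multiple, so that $d_s\mid m$ and $z^{1/d_s}=(z^{1/m})^{m/d_s}$. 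For the cycle of length $d_s$ with branch $g_s$, define for $j=0,\dots,d_s-1$ the functions $f(\xi)=g_s\bigl(e^{2\pi ij/d_s}\xi^{m/d_s}\bigr)$; each is analytic near $0$ because $g_s$ is analytic and $\xi\mapsto e^{2\pi ij/d_s}\xi^{m/d_s}$ is a polynomial. Collecting these over all cycles yields $f_1,\dots,f_k$ (with $k=\ell$) analytic on a common disk $U$. For a nonnegative real $t\in U$ the root $\sqrt[m]{t}$ is the nonnegative real $m$-th root, and substituting $\xi=\sqrt[m]{t}$ makes $e^{2\pi ij/d_s}\xi^{m/d_s}$ run exactly over the $d_s$-th roots of $t$; hence the values $f_i(\sqrt[m]{t})$ are precisely the roots of $P(t,\cdot)$ lying in $U$, as required.

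I expect the main obstacle to be the monodromy and uniformization step together with the extension across the origin: one must verify carefully that the locally defined root branches organize into cyclic orbits under the single generator of $\pi_1(D^*)$, that the substitution $z=\zeta^d$ genuinely restores single-valuedness on each orbit, and that the boundedness supplied by the root-continuity argument is available to invoke Riemann's theorem. The remaining bookkeeping — reducing to simple roots, the choices of $\epsilon$, $\eta$ and $U$, and passing from the individual radicals $z^{1/d_s}$ to the common $m$-th root — is routine once this core is in place.
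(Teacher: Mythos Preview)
The paper does not prove this theorem: it is quoted as a known result from the reference \cite{FL} (Fuks--Levin) and used as a black box in the proof of Lemma~\ref{l3}. There is therefore no in-paper proof to compare your proposal against.

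That said, your proposal is a correct and standard proof of this Puiseux-type statement. The monodromy argument on the punctured disk, the cycle decomposition of the permutation of root branches, the uniformization $z=\zeta^d$ for a cycle of length $d$, and the appeal to Riemann's removable-singularity theorem are exactly the classical route. The reduction to squarefree $P$ and the bookkeeping with $m=\mathrm{lcm}(d_1,\dots,d_r)$ are handled properly. One small point worth making explicit: if $z\mid P(z,w)$ (so that $P(0,\cdot)\equiv 0$), factor out the highest power of $z$ first; this does not affect the root sets $P(t,\cdot)=0$ for $t\ne 0$ and ensures that the Rouch\'e count of roots near $w=0$ is well posed. With that tweak your argument is complete.
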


Finally, we introduce a useful concept stronger than that of a smooth curve.
We call a curve $\gamma\subset\reals^2$ {\em analytic} if for each point $(x_0,y_0)\in\gamma$
we can choose a Cartesian coordinate system so that, within a neighborhood of $(x_0,y_0)$,
the curve $\gamma$ becomes the graph of a real function $y=f(x)$ analytic at~$x_0$.

\subsection{Proof of Lemma \protect\ref{main1}}\label{ss:prooflem}

We begin with a simpler particular case. Assume that $\gamma$ is irreducibly algebraic.

Let $P(x,y)$ be an irreducible polynomial such that $\gamma\subseteq\rho$ where 
$\rho=\{(x,y)\in\reals^2\,:\,P(x,y)=0\}$. 
Note that $\rho+(a,b)=\setdef{(x,y)\in\reals^2}{P(x-a,y-b)=0}$ and that the polynomial $P(x-a,y-b)$
is irreducible as well. Since the intersection 
$\rho\cap (\rho+(a,b))\supseteq\gamma\cap(\gamma+(a,b))$ is infinite,
the B\'ezout Theorem implies equality $\rho=\rho+(a,b)$. It follows that $\rho$ 
has infinite intersection with line $\ell=\setdef{(x,y)\in\reals^2}{b(x-x_0)-a(y-y_0)=0}$
for an arbitrary $(x_0,y_0)\in\rho$. 
Since any linear polynomial is irreducible, we can apply the B\'ezout Theorem once again
and conclude that $\ell=\rho\supset\gamma$.

To prove Lemma \ref{main1}, it now suffices to show that every smooth algebraic curve 
is irreducibly algebraic. We split this task into two steps by proving first that
every smooth algebraic curve is analytic and then that every analytic algebraic 
curve is irreducibly algebraic. 
The rest of the proof of Lemma \ref{main1} consists of the two respective lemmas.

\begin{lemma}\label{l3} 
Every smooth algebraic curve is analytic.
\end{lemma}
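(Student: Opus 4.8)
The plan is to work locally: reduce the claim to the analyticity of a one-variable function defined by an algebraic relation, describe that function by a Puiseux expansion via Theorem~\ref{FL}, and then use $C^\infty$-smoothness to force the expansion to be an ordinary power series.

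So let $\gamma$ be a smooth algebraic curve and fix a point $(x_0,y_0)\in\gamma$. Since $\gamma$ is smooth, after choosing Cartesian coordinates with origin at $(x_0,y_0)$ and with the $y$-axis not parallel to the tangent of $\gamma$ there, the inverse function theorem shows that in a neighborhood of the origin $\gamma$ is the graph of a $C^\infty$ function $y=f(x)$ on some interval $(-\e,\e)$, with $f(0)=0$; it remains to prove that $f$ is analytic at $0$. Choose a nonzero polynomial $P$ vanishing on $\gamma$; then $P(x,f(x))=0$ for all $|x|<\e$. Here $P$ must essentially depend on $y$ (otherwise $P(x)\equiv0$ on an interval and hence $P\equiv0$), and $P(0,0)=0$, so Theorem~\ref{FL} applies to the complex polynomial $P(z,w)$ and produces an integer $m\ge1$ and functions $f_1,\dots,f_k$ analytic on a disk about $0$ in $\complex$ such that, for every small $t>0$, every root of $P(t,\cdot)$ near $0$ equals $f_i(\sqrt[m]{t})$ for some $i$. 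Since $f$ is continuous with $f(0)=0$, this gives $f(t)=f_{i(t)}(\sqrt[m]{t})$ for $0<t<\delta$ and a suitable index $i(t)$.

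The next step is to pin this down to a single branch. By the Uniqueness Theorem, for $i\ne j$ with $f_i\not\equiv f_j$ the analytic function $f_i-f_j$ has a discrete zero set; so, after discarding repeated branches, there is an $\eta>0$ with $f_1(s),\dots,f_k(s)$ pairwise distinct for $0<s<\eta$. Then $f(s^m)$ equals exactly one $f_{i(s)}(s)$, and a brief continuity argument --- a sequence $s_n\to s^*$ with constant index $j$ would force $f_j(s^*)=\lim f_j(s_n)=\lim f(s_n^m)=f((s^*)^m)=f_{i(s^*)}(s^*)$, whence $j=i(s^*)$ --- shows that $i(\cdot)$ is locally constant, hence constant $\equiv i_0$, on $(0,\eta)$. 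Thus $f(t)=\sum_{n\ge0}a_n t^{n/m}$ on $(0,\eta^m)$, where $\sum_n a_n s^n$ is the power series of $f_{i_0}$ at $0$. Now I invoke smoothness: for each $N$, Taylor's formula gives $f(t)=\sum_{j\le N}\frac{f^{(j)}(0)}{j!}t^j+O(t^{N+1})$ as $t\to0^+$, and subtracting this from the Puiseux expansion leaves a finite real combination of the functions $t^{n/m}$ with $n\le Nm$ that is $o(t^N)$, which is impossible unless every coefficient vanishes. Hence $a_n=0$ whenever $m\nmid n$ and $a_{jm}=f^{(j)}(0)/j!$, so $f(t)=\sum_j\frac{f^{(j)}(0)}{j!}t^j$ on $(0,\eta^m)$ --- a convergent power series (of radius at least $\eta^m$, since it is obtained from $\sum_n a_n s^n$ by keeping only the terms with $m\mid n$). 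Applying the same reasoning to the curve obtained from $\gamma$ by the reflection in the $y$-axis (equivalently, Theorem~\ref{FL} for $P(-z,w)$) yields the analogous power-series representation of $f$ on an interval just to the left of $0$; comparing Taylor coefficients at $0$, which is legitimate since $f\in C^\infty$, identifies both one-sided series with $\sum_j\frac{f^{(j)}(0)}{j!}t^j$, so $f$ coincides with this convergent power series on a full neighborhood of $0$ and is therefore analytic at~$0$.

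I expect the main obstacle to be the passage from ``$f(t)$ is one of the finitely many branches $f_i(\sqrt[m]{t})$'' to ``$f$ agrees with a single branch on a whole interval'': one cannot apply the Uniqueness Theorem to $f$ itself, since $f$ is known only to be $C^\infty$, so the branches $f_i$ must first be separated from one another (via the Uniqueness Theorem applied to their pairwise differences) and then connectedness of the curve invoked. Once this is done, the decisive computation --- that $C^\infty$-smoothness eliminates the fractional exponents --- is a routine uniqueness-of-asymptotic-expansions argument, and the remaining two-sided issue is dispatched by the reflection trick.
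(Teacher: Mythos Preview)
Your proof is correct and follows essentially the same route as the paper's: localize to a $C^\infty$ graph, invoke Theorem~\ref{FL} for the Puiseux branches, separate the branches via the Uniqueness Theorem, use connectedness to select a single branch, and then let $C^\infty$-smoothness kill the fractional exponents, treating the left and right half-intervals separately and gluing them through the Taylor coefficients at~$0$. Your write-up is in fact a bit more explicit than the paper's at two points (the branch-selection continuity argument and the asymptotic-expansion argument showing $a_n=0$ for $m\nmid n$), but the underlying ideas are the same.
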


\begin{proof} 
Let $(x(s),y(s))$, $0<s<1$, be a smooth algebraic curve.
We need to check that it is analytic at each point $s_0\in(0,1)$. We lose no generality 
assuming that $x(s_0)=y(s_0)=0$. By a suitable rotation of the coordinate system, 
we can reduce our problem to the case that in some
neighborhood of $(0,0)$ the curve coincides with the graph of a $C^\infty$-function 
$y(x)$ defined on an interval $(-\delta,\delta)$. 
It suffices to verify that the function $y(x)$ is analytic at zero. 

In fact, it suffices to find $\e>0$ and two power series $\sum_{n=0}^\infty a_nx^n$ and
$\sum_{n=0}^\infty b_nx^n$ that converge to the function $y(x)$ on the intervals $[0,\e)$ 
and $(-\e,0]$ respectively. 
Indeed, one can use standard analytic tools to show that
\begin{equation}\label{eq:taylor}
a_n=\frac{y^{(n)}(0)}{n!}=b_n,
\end{equation} 
which means that the two series  
determine the same analytic function coinciding with $y(x)$ on $(-\e,\e)$.

Now we show how to find $\e$ and the series $\sum_{n=0}^\infty a_nx^n$ (the series 
$\sum_{n=0}^\infty b_nx^n$ exists by a symmetric argument).

Since our smooth curve $(x,y(x))$ is algebraic, there is a polynomial $P(x,y)$ 
such that $P(x,y(x))=0$ for all $x\in(-\delta,\delta)$. Note that $P$ must contain
at least one occurrence of the variable $y$.
Let a neighborhood $U\subset\complex$ of zero, functions $f_1,\ldots,f_k$ analytic on $U$, 
and a positive integer $m$ be as granted by Theorem \ref{FL}. Suppose that the $f_i$'s
are pairwise distinct. 
Take a positive real $\e<\delta$ so small that 
$\setdef{(x,y(x))}{|x|<\e}\subset U^2$ and 
\begin{equation}\label{eq:noneq}
f_i(z)\ne f_j(z)\mathrm{\ whenever\ }0<|z|<\sqrt[m]{\e},
\end{equation}
for any $1\le i<j\le k$.
Such a choice of $\e$ exists by the Uniqueness Theorem.
Non-equality \refeq{noneq} ensures disjointness of sets
$$
W_i=\setdef{(x,f_i(\kern-2pt\sqrt[m]{x}))}{0<x<\e}\mathrm{\ and \ } 
W_j=\setdef{(x,f_j(\kern-2pt\sqrt[m]{x}))}{0<x<\e}.
$$

By Theorem \ref{FL} we have inclusion 
$\setdef{(x,y(x))}{0< x<\e}\subset \bigcup_{i=1}^k W_i$.
It follows by continuity of the graph of $y(x)$ that
\begin{equation}\label{eq2}
\setdef{(x,y(x))}{0< x<\e}\subset W_i
\end{equation}
for some $i$. 
Develop the analytic function $f_i$ into the Maclaurin series 
$f_i(x)=\sum_{n=0}^\infty a_nx^n$. 
Replacing, if necessary, the number $\e$ by a smaller one, we can 
assume that this series converges to $f_i(x)$ uniformly on the interval 
$[0,\kern-2pt\sqrt[m]{\e})$. 
By (\ref{eq2}) we hence have $y(x)=\sum_{n=0}^\infty a_n x^{n/m}$, where the series
converges uniformly for all $x\in[0,\e)$. 
The smoothness of $y(x)$ 
at zero guarantees that $a_n=0$ for all numbers $n$, not divisible by $m$.
Therefore, $y(x)=\sum_{n=0}^\infty a_{nm}x^n$ for $x\in[0,\e)$, quod erat demonstrandum.
It remains to note that the coefficients actually must be real (say, by \refeq{taylor}).
\end{proof}

\begin{lemma}\label{l4} 
Every analytic algebraic curve $\gamma\subset\reals^2$ is irreducibly\/\footnote{%
The lemma claims the irreducibility over $\reals$. The proof actually
implies the irreducibility even over $\complex$.}
algebraic.
\end{lemma}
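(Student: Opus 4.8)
The plan is to show that if an analytic algebraic curve $\gamma$ satisfies $P(x,y)=0$ for some polynomial $P$ that factors as $P=P_1P_2\cdots P_k$ into irreducible factors, then in fact $\gamma$ already lies in the zero set $\rho_j$ of a single irreducible factor $P_j$; this immediately gives $\gamma$ irreducibly algebraic (with witness $P_j$). First I would parameterize $\gamma$ by an analytic map $(x(s),y(s))$, $s\in(0,1)$ — this is exactly what Lemma \ref{l3} buys us, since being analytic as a curve lets us locally write $y$ as an analytic function of $x$ (or vice versa), and by a standard patching argument one gets a global analytic parameterization on a connected parameter interval. Then for each $j$ I would consider the function $g_j(s)=P_j(x(s),y(s))$. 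Each $g_j$ is analytic on $(0,1)$ (composition of a polynomial with analytic functions), and their product $g_1(s)\cdots g_k(s)=P(x(s),y(s))\equiv 0$.

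The key step is then a connectedness-plus-uniqueness argument. Since a finite product of analytic functions on a connected set vanishes identically, at least one factor must vanish on a set with an accumulation point: indeed, the union of the zero sets $Z_j=\{s:g_j(s)=0\}$ is all of $(0,1)$, which is uncountable, so some $Z_j$ is infinite and hence (being a subset of a bounded interval) has an accumulation point in $(0,1)$. By the Uniqueness Theorem applied to $g_j$ and the zero function on the connected set $(0,1)$, we conclude $g_j\equiv 0$, i.e. $P_j(x(s),y(s))=0$ for all $s$. Therefore $\gamma\subseteq\rho_j=\{(x,y):P_j(x,y)=0\}$ with $P_j$ irreducible, so $\gamma$ is irreducibly algebraic. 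The footnote's remark about irreducibility over $\complex$ follows because we may equally well factor $P$ into $\complex$-irreducible factors and run the identical argument; the resulting $P_j$ is then irreducible over $\complex$.

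I expect the only real subtlety to be justifying that ``analytic curve'' in the sense defined (locally a graph of an analytic function after a suitable rotation) yields a genuine analytic parameterization on a single connected interval, so that the Uniqueness Theorem is applicable with a connected domain. Locally this is clear — near each point $\gamma$ is the graph $y=f(x)$ or $x=f(y)$ with $f$ analytic, and an analytic reparameterization by arclength (or by one of the coordinates on each patch) is routine — and since $\gamma$ is by hypothesis the injective continuous image of an interval, the local analytic structures agree on overlaps (again by the Uniqueness Theorem, as two analytic parameterizations of the same arc differ by an analytic change of parameter) and patch together. Once that is in hand, everything else is the short dichotomy above. A minor point worth stating carefully: even if some $g_j$ vanishes at isolated points only, that does not trigger the conclusion for that $j$ — but the counting argument guarantees that \emph{some} $g_j$ vanishes on an uncountable, hence accumulating, set, and that is all we need.
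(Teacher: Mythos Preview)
Your argument is correct, and the core idea---analyticity plus the Uniqueness Theorem plus connectedness forces one irreducible factor to vanish on all of $\gamma$---is the same as the paper's. The implementation differs, however. You first manufacture a \emph{global} analytic parameterization $(x(s),y(s))$ on a single interval and then argue in the parameter: the $g_j=P_j(x(s),y(s))$ are analytic, their product vanishes identically, so some $g_j$ has uncountable zero set and hence is identically zero. The paper instead avoids the global parameterization entirely: it works with the sets $\gamma_i'=\{$non-isolated points of $P_i=0$ on $\gamma\}$, shows each $\gamma_i'$ is clopen in $\gamma$ using only the \emph{local} graph-of-analytic-function description, and invokes connectedness of $\gamma$ as a topological space.

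What each approach buys: the paper's clopen argument is self-contained from the definition of ``analytic curve'' given (local analytic graph after a rotation), with no patching needed. Your route is cleaner once the global parameterization is in hand, but that patching step---gluing the local analytic charts into a single analytic map on $(0,1)$---is genuine extra work that you only sketch; it is standard (a real-analytic $1$-manifold homeomorphic to an interval is real-analytically an interval), but it is not a triviality, and the paper's argument shows it is unnecessary here. One small cosmetic point: your appeal to Lemma~\ref{l3} is misplaced---the hypothesis already gives analyticity, so Lemma~\ref{l3} plays no role in the present proof.
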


\begin{proof}
Let $P(x,y)$ be a nonzero  polynomial of the smallest possible degree such 
that $\gamma\subseteq\setdef{(x,y)\in\reals^2}{P(x,y)=0}$. We will prove that $P(x,y)$ is irreducible. 

Assume to the contrary that $P(x,y)=P_1(x,y) P_2(x,y)$ with $P_1$ and $P_2$ being
non-constant polynomials. For $i=1,2$ let $\gamma_i=\setdef{(x,y)\in\gamma}{P_i(x,y)=0}$ and 
$\gamma_i'$ be the set of non-isolated points of $\gamma_i$. 
It is clear that the sets $\gamma_i$ and $\gamma_i'$ are closed in $\gamma$. 
We claim that $\gamma_i'$ is also open in $\gamma$. 

Indeed, take an arbitrary point $(x_0,y_0)\in\gamma_i'$. After moving appropriately 
the coordinate system, we can assume that in some neighborhood 
of $(x_0,y_0)$ the curve $\gamma$ coincides with the graph of a real function $f(x)$
analytic on an interval $(x_0-\e,x_0+\e)$. Notice that the function $g_i(x)=P_i(x,f(x))$ 
is analytic on the same interval.
Denote the projection of $\gamma_i$ onto the $x$-axis by $\eta_i$.
Since $(x_0,y_0)\in\gamma_i'$, the set $\eta_i\cap (x_0-\e,x_0+\e)$ is infinite and
contains $x_0$ as an accumulation point. By the Uniqueness Theorem, 
$g_i(x)=0$ for all $x\in(x_0-\e,x_0+\e)$. Consequently, $\setdef{(x,f(x))}{x_0-\e<x<x_0+\e}\subset\gamma_i$
and hence $(x_0,y_0)$ belongs to the interior of $\gamma_i'$ in $\gamma$. 

Thus, both the sets $\gamma_1'$ and $\gamma_2'$ are closed-and-open in $\gamma$. 
Since $\gamma=\gamma_1'\cup\gamma_2'$, one of the sets is nonempty and hence 
coincides with $\gamma$ by connectedness of $\gamma$. We see that 
$\gamma=\gamma_i$ is included in $\setdef{(x,y)\in\reals^2}{P_i(x,y)=0}$ for the respective $i$.
This makes a contradiction because $P_i$ has degree strictly less than the degree of~$P$. 
\end{proof}

\section{Modifications of the axiom system \protect\refax{cong}--\protect\refax{alg}}%
\label{s:minimality}

We now investigate what happens when we deviate a little from our axiomatics.
In particular, we prove that if either \refax{smooth} or \refax{alg} is removed,
the yin-yang borderline is not specified any more uniquely by the relaxed axiom system.
We also discuss modifications of \refax{rad} and obtain
an existence and uniqueness result under the
condition that $\beta$ crosses each radius exactly twice: then any spiral
satisfying the other axioms is congruent to the two-turn Fermat spiral.

\subsection{Non-algebraic, analytic Yin-Yang variations}

The following example de\-monstrates that there is a variety of non-congruent curves
$\beta$ satisfying axioms \refax{cong}--\refax{smooth}. By Theorem \ref{thm:yy},
all such curves excepting Fermat's spiral fail to meet~\refax{alg}.

\begin{example}
Let $0<\lambda<1/4$. The polar equation
\begin{equation}\label{eq:sin}
\pi^2r^2=\phi+\lambda\sin4\phi, \mathrm{\ where\ }0\le\phi\le\pi\
(\mathrm{or\ }-\pi^{-1/2}\le r\le\pi^{-1/2}),
\end{equation}
determines an analytic curve $\beta$ satisfying \refax{cong}--\refax{smooth}.
\end{example}

\begin{proof}
For each $0<\lambda<1/4$, the function $\rho(\phi)=\phi+\lambda\sin4\phi$
is strictly monotone and hence invertible. Since $\rho(\phi)$ is analytic,
its inverse $\phi=\phi(\rho)$ is analytic as well. This suggests the
following analytic parameterization for $\beta$:
\begin{equation}\label{eq:sinparam}
\left\{
\begin{array}{rcl}
x(r)&=&r\cos\phi(\pi^2r^2),\\
y(r)&=&r\sin\phi(\pi^2r^2).
\end{array}
\right.
\end{equation}

Note that $\beta$ is inscribed in the disk of unit area.
$\beta$ satisfies \refax{cong} because \refeq{sin} is centrally symmetric.
\refax{conc} follows from the monotonicity of $\rho(\phi)$. Furthermore,
\refax{rad} follows from the fact that $\beta$ is exactly one turn of a spiral.
To verify \refax{perf}, apply the map $F(r,\phi)=(\frac\phi{2\pi},\pi r^2)$
converting the disk into the cylinder. Then $\beta$ is transformed into
two curves, one being the graph of function
$$
\alpha(u)=2\,u+\frac\lambda\pi\,\sin8\pi u.
$$
Since $\alpha$ satisfies \refeq{alal}, property \refax{perf} immediately follows
from Lemma \ref{lem:alpha}. \refax{smooth} is true because the parameterization
\refeq{sinparam} is analytic and hence smooth. 
Finally, $\beta$ is an analytic curve because \refeq{sinparam} allows us to
locally express $y$ as an analytic function of $x$ or $x$ as an analytic function of~$y$.
\end{proof}

\subsection{Finitely differentiable, algebraic Yin-Yang variations}

Now we exhibit non-congruent curves satisfying \refax{cong}--\refax{perf} and
\refax{alg} and being differentiable any preassigned number of times.

\begin{example}
Let $\lambda$ be a positive real and $k$ a nonnegative integer.
Define a function
$$
f(u)=\left\{
\begin{array}{lr}
f_1(u)=2u+\lambda u^{k+1}(\frac14-u)^{k+1},&\quad 0\le u\le\frac14,\\[2mm]
f_2(u)=\frac12+f_1(u-\frac14),&\quad \frac14\le u\le\frac12.
\end{array}
\right.
$$
For each sufficiently small $\lambda$, the polar equation
\begin{equation}\label{eq:f}
\pi r^2=f\of{\frac\phi{2\pi}}, \mathrm{\ where\ }0\le\phi\le\pi\
(\mathrm{or\ }-\pi^{-1/2}\le r\le\pi^{-1/2}),
\end{equation}
determines a $k$ times continuously differentiable curve $\beta$ satisfying axioms
\refax{cong}--\refax{perf} and~\refax{alg}.
\end{example}

\begin{proof}
Let $\lambda$ be so small that $f_1$ has positive derivative on $[0,\frac14]$.
As easily seen, $f$ is strictly monotone and hence invertible.
Note that $f$ is $k$ times continuously differentiable.
A little care is needed only at $u=1/4$, where we have
$$
f_1^{(i)}(1/4)=f_0^{(i)}(1/4)=f_2^{(i)}(1/4)
$$
for $i\le k$ and $f_0(u)=2u$.
It follows that the inverse $f^{-1}$ is $k$ times continuously differentiable
as well. This leads to a $C^k$-parameterization of \refeq{f}:
$$
\left\{
\begin{array}{lcl}
x(r)&=&r\cos\of{2\pi f^{-1}(\pi r^2) },\\
y(r)&=&r\sin\of{2\pi f^{-1}(\pi r^2) },
\end{array}
\right.
$$
where $-1/\sqrt\pi\le r\le1/\sqrt\pi$.

Axioms \refax{cong}--\refax{rad} are easy to verify.
The transformation $F$ of the disk into the cylinder takes the branch of
\refeq{f} for positive $r$ onto the graph of the function $v=f(u)$.
Since $f$ is defined so that $f(u+\frac14)=f(u)+\frac12$ for all $0\le u\le\frac14$,
axiom \refax{perf} follows by Lemma \ref{lem:alpha}.
Finally, $\beta$ is algebraic because its points satisfy relation 
$$
\of{\pi r^2-f_1(\frac\phi{2\pi})}\of{\pi r^2-f_2(\frac\phi{2\pi})}=0
$$
in polar coordinates.
\end{proof}

\subsection{Varying the number of radial crossings}\label{ss:vary_rad}

For the purpose of the current discussion, we will call a curve $\beta$
\emph{spiral} if it is described by a polar equation
\begin{equation}\label{eq:spiral1}
|r|=f(\phi),\qquad 0\le\phi\le\pi\ell,
\end{equation}
where $f$ is a strictly increasing real function. We will say that
$\beta$ \emph{makes $\ell$ turns}, where $\ell$ does not need be
integer.

Let $D$ be the disk of unit area centered at the coordinate origin.
We will suppose that 
\begin{equation}\label{eq:spiral2}
f(\pi\ell)=\frac1{\sqrt\pi}.
\end{equation}
Thus, $\beta$ is inscribed in $D$ and splits $D$ into two parts. 
Since $\beta$ is symmetric with
respect to the origin, we have \refax{cong}. Condition \refax{conc}
follows from the monotonicity of $f$. Axiom \refax{rad} captures the case that
$\beta$ makes exactly one turn. 
Consider the following modification:
\begin{enumerate}
\item[\refaxxx{rad}]{\it
$\beta$ crosses each radius of $D$ twice (besides the center of $D$).}
\end{enumerate}
As easily seen, any spiral satisfying \refaxxx{rad} makes presisely
2 turns. Note that a curve satisfying \refaxxx{rad} is not necessary
a spiral (see Fig.~\ref{fig:nonspiral}).

\begin{figure}
\centerline{\includegraphics{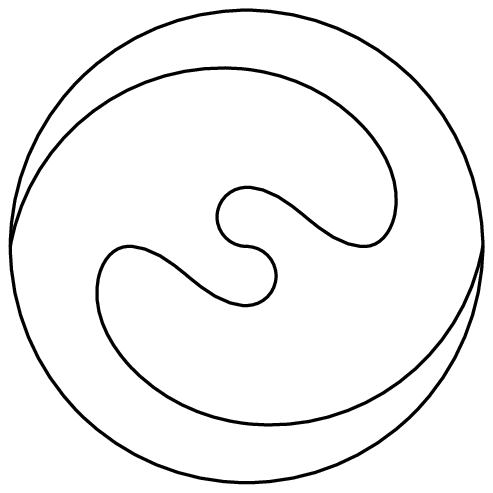}}
\caption{A nonstandard model of axioms \refax{cong}, \refax{conc}, and \refaxxx{rad}.}
\label{fig:nonspiral}
\end{figure}

\begin{theorem}\label{thm:2turns}
A spiral $\beta$ satisfies the axiom system \refax{cong}--\refax{alg} with
\refaxxx{rad} in place of \refax{rad} if and only if it is congruent
to Fermat's spiral
\begin{equation}\label{eq:2turns}
2\pi^2r^2=\phi,\qquad 0\le\phi\le2\pi.
\end{equation}
\end{theorem}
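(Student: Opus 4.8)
The plan is to mimic the proof of Theorem~\ref{thm:yy}, but working on the two-turn spiral rather than the one-turn spiral. First I would set up the analogue of the associated curve~$\alpha$. A spiral $\beta$ satisfying \refaxxx{rad} makes exactly two turns, so its polar equation is $|r|=f(\phi)$ for $0\le\phi\le2\pi$. As in Section~\ref{ss:proofthm}, $\beta$ is centrally symmetric and the origin splits it into two congruent branches $\beta_1,\beta_2$, each making one full turn. Applying the transformation $F(r,\phi)=(\frac\phi{2\pi},\pi r^2)$ carries $\beta_1$ onto the graph of a monotone function $v=\alpha(u)$, but now $u$ ranges over a semi-interval of length~$1$ (one full turn of the cylinder), say $0<u\le1$, with $\lim_{u\to0}\alpha(u)=0$ and $\alpha(1)=1$; the branch $\beta_2$ maps to the graph of $v=\alpha(u-1)$ (equivalently $\alpha(u)-?$, taking the $u$-variable mod~$1$). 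Here $u\in\circel=\reals/\integers$ now parametrises the full circumference, and the two branches partition the cylinder $C$ into the regions $A$ (above $\alpha$, below the shifted copy) and $B$; these are the images under $F$ of the two pieces into which $\beta$ splits $D$.

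Next I would establish the functional equation that \refax{perf} forces on~$\alpha$, the analogue of Lemma~\ref{lem:alpha}. Since $\beta$ splits $D$ into perfect sets, Lemma~\ref{lem:perfect} gives $\mu(A\cap s_gA)=1/4$ for every reflection $s_g(u,v)=(g-u,v)$ of the cylinder. Repeating the slice-wise computation of Lemma~\ref{lem:alpha} — expressing the horizontal slice $A_v$ as $\alpha^{-1}(v)+H$ with $H=[0,\frac12]$, passing to $\bar\alpha(u)=\alpha(u/2)$, writing $\mu(A\cap s_gA)$ as an integral of a width function $m(u)$ over an interval of length $1/2$, and differentiating in~$g$ since the integral is constant — I expect to obtain exactly $m(u)=m(u+1/2)$ and hence $\alpha(u+\tfrac12)=\alpha(u)+\tfrac12$ for $0<u\le\tfrac12$. (The bookkeeping differs from Lemma~\ref{lem:alpha} only in that the total $u$-period is now~$1$ instead of~$1/2$, so the shift that produces $B$ from $\alpha$ is by $(\frac12,\frac12)$ rather than $(\frac14,\frac12)$, and the constant one differentiates out of is still $1/4$.) Conversely, any $\alpha$ satisfying this relation yields, via $F^{-1}$, a spiral whose two pieces are perfect — this is the same construction as in the proof of Lemma~\ref{lem:perfect}. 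In particular the line $v=u$ on $0\le u\le1$ satisfies the relation, and $F^{-1}$ sends $v=u$ together with its shift $v=u-1$ to the spiral $2\pi^2r^2=\phi$ on $0\le\phi\le2\pi$, which is a Fermat spiral (two turns). One checks directly that \refeq{2turns} obeys \refax{cong}, \refax{conc}, \refaxxx{rad}, \refax{smooth}, and \refax{alg}, exactly as the one-turn Fermat spiral was checked after Lemma~\ref{lem:alpha}.

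For the uniqueness direction I would argue that the relation $\alpha(u+\tfrac12)=\alpha(u)+\tfrac12$ means the curve $\alpha$ has infinite intersection with its shift in the nonzero vector $(\tfrac12,\tfrac12)$; and, just as in Section~\ref{ss:proofthm}, smoothness of $\beta$ (from \refax{smooth}) makes $\alpha$ smooth, while algebraicity of $\beta$ in polar coordinates (from \refax{alg}), combined with the form of $F$, makes $\alpha$ satisfy a polynomial relation in $u$ and $\sqrt v$, hence (after clearing the square root) an honest polynomial relation in $(u,v)$ — so $\alpha$ is a smooth algebraic curve. Lemma~\ref{main1} then applies verbatim and forces $\alpha$ to lie on a line. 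A monotone line through the closure points $(0,0)$ and $(1,1)$ is $v=u$, so $\alpha(u)=u$, and therefore $\beta$ is congruent to the Fermat spiral \refeq{2turns}. The main obstacle I anticipate is purely organisational rather than conceptual: being careful that the $u$-variable now lives on a circle of circumference~$1$ that is traversed once per branch (two turns total), so that the shift vectors, the ranges of integration, and the endpoint normalisations $\alpha(1)=1$ all get the right constants; the analytic heart of the argument (Lemmas~\ref{main1}, \ref{l3}, \ref{l4}) is reused without change.
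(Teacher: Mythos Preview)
Your derivation of the functional equation from \refax{perf} is where the argument goes wrong. Repeating the computation of Lemma~\ref{lem:alpha} in the two-turn situation does yield the relation \refeq{mm}, but translating it back to $\alpha$ you do \emph{not} obtain $\alpha(u+\tfrac12)=\alpha(u)+\tfrac12$. The substitution $\bar\alpha(u)=\alpha(u/2)$ used in that proof rescales the domain $(0,\tfrac12]$ of $\alpha$ onto the full circle $(0,1]$; in the two-turn case $\alpha$ already has domain $(0,1]$, so the doubling $w\mapsto 2\alpha^{-1}(v)$ wraps twice around $\circel$ and each vertical slice $\bar A_u$ consists of two arcs rather than one. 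Carrying the calculation through, \refeq{mm} becomes the four-term relation
\[
\tfrac12+\alpha(u)+\alpha(u+\tfrac12)=\alpha(u+\tfrac14)+\alpha(u+\tfrac34),\qquad 0<u\le\tfrac14,
\]
which is strictly weaker than your two-term relation: for example $\alpha(u)=u+\varepsilon\sin 2\pi u$ with small $\varepsilon>0$ satisfies the four-term identity but not $\alpha(u+\tfrac12)=\alpha(u)+\tfrac12$. (The parenthetical claim that ``the shift that produces $B$ from $\alpha$ is by $(\tfrac12,\tfrac12)$'' is the source of the slip: the second branch $\beta_2$ is the rotation of $\beta_1$ by $\pi$, hence in cylinder coordinates always the shift by $\tfrac12$ in $u$ with no vertical component, in the one-turn and two-turn cases alike.)

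Because the four-term relation does not exhibit $\alpha$ as coinciding with a single translate of itself on a set with an accumulation point, Lemma~\ref{main1} cannot be applied directly, and a genuinely new idea is needed. The paper introduces the difference $\sigma(u)=\alpha(u+\tfrac14)-\alpha(u)$, for which the relation becomes $\sigma(u+\tfrac12)=\tfrac12-\sigma(u)$; this says the graph of the smooth algebraic function $\sigma$ has infinite intersection with its image under the affine map $S(u,v)=(u+\tfrac12,\tfrac12-v)$, which is \emph{not} a translation. A B\'ezout argument tailored to $S$ (exploiting that $S^2$ is the translation by $(1,0)$, so that the zero locus is invariant under all horizontal integer shifts and hence meets every horizontal line infinitely often) forces $\sigma$ to be constant. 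Only after this step does one get $\alpha(u+\tfrac14)=\alpha(u)+v_0$, to which Lemma~\ref{main1} now applies to conclude that $\alpha$ is linear. This intermediate reduction via $\sigma$ is the idea missing from your plan.
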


\noindent
The corresponding version of the yin-yang symbol can be seen in Fig.~\ref{fig:evolution}-d.

\begin{proof}
We proceed similarly to the proof of Theorem \ref{thm:yy}.
Let $\alpha$ be the curve associated with $\beta$ in the way
described in the beginning of Section \ref{ss:proofthm}.
We identify this curve with a monotone map
$\alpha\function{(0,1]}{(0,1]}$. Assume that $\beta$ satisfies \refax{perf}.
The proof of Lemma \ref{lem:alpha} applies and gives us the relation \refeq{mm}, which
in terms of $\alpha$ reads
\begin{equation}\label{eq:alalal}
\frac12+\alpha(u)+\alpha(u+\frac12)=
\alpha(u+\frac14)+\alpha(u+\frac34)
\mathrm{\ for\ all\ }0<u\le\frac14.
\end{equation}
Define a function $\sigma\function{(0,\frac34]}{(0,1]}$ by 
$$
\sigma(u)=\alpha(u+\frac14)-\alpha(u).
$$
Relation \refeq{alalal} can be now rewritten as
\begin{equation}\label{eq:sigma}
\sigma(u+\frac12)=\frac12-\sigma(u)\mathrm{\ for\ all\ }0<u\le\frac14.
\end{equation}

Note that $\sigma$ is smooth because so is $\alpha$. Furthermore, recall that
a function $v(u)$ is called \emph{algebraic} if for some bivariate nonzero
polynomial $P$ we have $P(u,v(u))=0$ for all arguments $u$. It is known that
the difference of two algebraic functions is algebraic itself. Thus,
$\sigma$ is algebraic because so is~$\alpha$.

We now need the following analog of Lemma \ref{main1}: A smooth algebraic function $\sigma$
satisfies \refeq{sigma} only if $\sigma$ is constant. By Lemmas \ref{l3} and \ref{l4},
it suffices to prove this claim for an irreducibly algebraic $\sigma$. Let $P$ be
an irreducible polynomial such that $P(u,\sigma(u))=0$ for all $0<u\le3/4$ and denote
$\rho=\setdef{(u,v)}{P(u,v)=0}$. Consider an affine transformation $S(u,v)=(u+\frac12,\frac12-v)$.
Note that $S(\rho)$ consists of exactly those points $(u,v)$ satisfying $P(u-\frac12,\frac12-v)=0$.
Since polynomials $P(u,v)$ and $P(u-\frac12,\frac12-v)$ are irreducible simultaneously
and the intersection $\rho\cap S(\rho)$ is infinite by \refeq{sigma}, we conclude by
B\'ezout's theorem that $\rho=S(\rho)$. It follows that $\rho=S^2(\rho)$. The latter
equality implies that, along with each point $(u_0,v_0)$, the set $\rho$ contains
every point $(u_0+n,v_0)$ for a natural $n$. Therefore, $\rho$ has infinite intersection
with the set of points satisfying a linear equation $v-v_0=0$. Applying B\'ezout's theorem
once again, we see that $\sigma(u)=v_0$ everywhere.

We have just proved that
$$
\alpha(u+\frac14)=\alpha(u)+v_0
\mathrm{\ for\ all\ }0<u\le\frac14,
$$
which provides us with an analog of \refeq{alal}. Applying Lemma \ref{main1}, we
conclude that $\alpha$ is a line and, therefore, $\beta$ is Fermat's spiral.

Verification of conditions \refax{cong}--\refax{alg} for Fermat's spiral \refeq{2turns}
is easy and left to the reader.
\end{proof}

\subsection{Questions and comments}
\mbox{}

\que
Is the axiom system \refax{cong}--\refax{alg} with \refaxxx{rad} in place of \refax{rad}
categorical, that is, does it determine Fermat's spiral uniquely?
By Theorem \ref{thm:2turns}, if these axioms are fulfilled for another curve,
the latter is not a spiral and looks like Fig.~\ref{fig:nonspiral}.

\que
Does the axiom system \refax{cong}--\refax{alg} remain categorical after
exclusion of \refax{conc} from it?

\que
As we mentioned, any spiral (in the sense of \refeq{spiral1}--\refeq{spiral2})
has properties \refax{cong} and \refax{conc}. The following hypothesis seems
plausible: If a spiral satisfies also axioms \refax{perf}--\refax{alg}, then
it is congruent to Fermat's spiral with integer number of turns.
We can confirm this hypothesis for spirals making at most 2 turns.
Consider, for example, the case that a spiral $\beta$ makes $3/2$ turns
and show that such a $\beta$ cannot satisfy all conditions \refax{perf}--\refax{alg}
simultaneously.

If $\beta$ satisfies \refax{perf}, then for the corresponding function
$\alpha\function{(0,\frac34]}{(0,1]}$ we derive from \refeq{mm}, which holds true for any spiral, a relation
\begin{equation}\label{eq:al3}
\alpha(u)+\alpha(u+\frac12)=\alpha(u+\frac14)+\frac12\mathrm{\ for\ all\ }0<u\le\frac14.
\end{equation}
If $\beta$ satisfies also \refax{smooth} and \refax{alg}, $\alpha(u)$ must be analytic.
Using \refeq{al3}, we can analytically extend this function to a larger interval, say,
$(-1/4,1)$. Differentiating repeatedly \refeq{al3}, we obtain a relation
\begin{equation}\label{eq:al4}
\alpha^{(k)}(u)+\alpha^{(k)}(u+\frac12)=\alpha^{(k)}(u+\frac14)
\mathrm{\ for\ all\ }-\frac14<u<\frac12
\end{equation}
for all $k$. Substituting $u=0$ and $u=1/4$ in \refeq{al4}, we easily infer equality
$$
\alpha^{(k)}(3/4)=-\alpha^{(k)}(0).
$$ 
It follows that two analytic functions
$\alpha(u)$ and $-\alpha(u-3/4)$ coincide in a neighborhood of the point $u=3/4$.
By the Uniqueness theorem, we have identity $\alpha(u)=-\alpha(u-3/4)$.
Using it in the same way as the identity \refeq{sigma}
was used in the proof of Theorem \ref{thm:2turns},
we conclude that $\alpha(u)$ must be constant. This contradicts the fact that
$\alpha(u)$ is strictly monotone.

\que
3-part yin-yang symbols are frequently seen in Korea, where they are called
\emph{Sam-Taegeuk}, and in Zen temples in the Himalayas, where they are
called \emph{Gankyil} or \emph{Wheel of Joy} (see Fig.\ \ref{fig:kpartite}-a).
The Gankyil has a 4-part variant \cite{Beer}.
From the mathematical point of view, a $k$-part
yin-yang symbol is a partition of the unit disk $D$ by a $k$-coil spiral into
$k$ congruent parts. Equivalently, we can speak of
a 1-coil spiral $\sigma$ along with its rotations in angles $2\pi i/k$ for all $i<k$.
Recall that the notion of a spiral was formally introduced in the beginning of 
Section \ref{ss:vary_rad};
\emph{1 coil} means that only positive values of $r$ are allowed,
\emph{1 turn} means that $0\le\phi\le\pi$.
Extending the notion of a perfect set to any measurable set $A\subset D$, we call $A$
\emph{perfect} if every symmetric subset of $A$ has measure at most $\mu(A)^2$
(symmetric subsets of measure at least $\mu(A)^2$ always exist by Theorem \ref{thm:L}).
We adopt our axiomatics in the $k$-partite case by requiring that each part of the symbol
is perfect and that $\sigma$ is smooth and algebraic in polar coordinates.
Our main result, Theorem \ref{thm:yy}, generalizes to the $k$-partite case as follows:
Among 1-turn spirals, only Fermat's spiral meets the postulated conditions
(see Fig. \ref{fig:kpartite}-b,c).

\begin{figure}
\centerline{\includegraphics{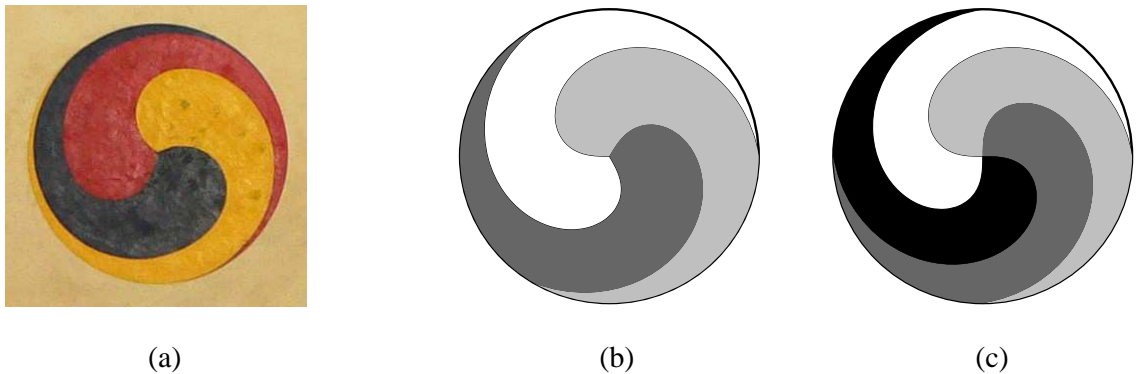}}
\caption{(a) The Sam-Taegeuk symbol \cite{SamTaegeuk}; (b,c)
the versions based on Fermat's spiral.}
\label{fig:kpartite}
\end{figure}

\subsection*{Acknowledgment}
We thank Igor Chyzhykov for referring us to the book~\cite{FL}
and Maria Korolyuk for useful comments.

\clearpage

\appendix

\section{A MetaPost code for drawing yin-yang symbols}\label{app:code}

\begin{verbatim}

beginfig(1)

                     % INPUT PARAMETERS
turn = 1;            % number of turns (does not need to be integer)  
radius = 2cm;        % radius of the disk
rotate = 0;          % optional rotation counter-clockwise (degrees)
w = 1bp;             % thickness of the circle
color dark;          % color of the dark part
dark = .5white;      % (1white=white, 0white=black)
boolean clockwise;   % orientation
clockwise = true;    % set to false if counter-clockwise preferred 

a = ((.5-turn)*180)+rotate;
shif = (radius+1cm); pair shft; shft = (shif,shif);
if (turn > 2) : interpol = (1/(16turn)) else  : interpol = (1/16) fi;
 
path spiral;
spiral := (0,0)
 for r:=0 step interpol until 1:
  ..(r,0) rotated (180r*r*turn)
  endfor
 ..(1,0) rotated (180turn); 

picture symbol;

symbol = image(
  draw spiral scaled radius;
  draw spiral scaled radius rotated 180;
  draw fullcircle scaled (2radius) withpen pencircle scaled w;
  fill buildcycle(reverse spiral scaled radius rotated 180,
                  spiral scaled radius,
                  halfcircle scaled (2radius) rotated (180*(turn-1))) 
                                                   withcolor dark; );

if clockwise:
 draw symbol rotated a shifted shft;
else:
 draw symbol reflectedabout ((0,0),(0,1)) rotated a shifted shft;
fi;

endfig; end

\end{verbatim}

\section{Appearances of Fermat's spiral\\ in the yin-yang symbolism}\label{app:flags}

Playing with Ramsey properties of the plane disk, we have identified the
borderline between Yin and Yang as Fermat's spiral. Though such an approach
must be a novelty in this area, we cannot pretend to come with a new
suggestion in the yin-yang symbolism. It seems that Fermat's spiral
was used to demarcate Yin and Yang already a long time ago.
Two examples are shown in Fig.\ \ref{fig:flags}.
Another example is given by Fig. \ref{fig:gallery}-a:
Being produced by our MetaPost code with parameter {\tt turn=2/9},
it looks quite close to the picture of the yin-yang symbol in \emph{Britannica} 
\cite{Britan}. We find this phenomenon astonishing
and, unfortunaly, are not aware of any historical background for it.

\begin{figure}[h]
\centerline{\includegraphics{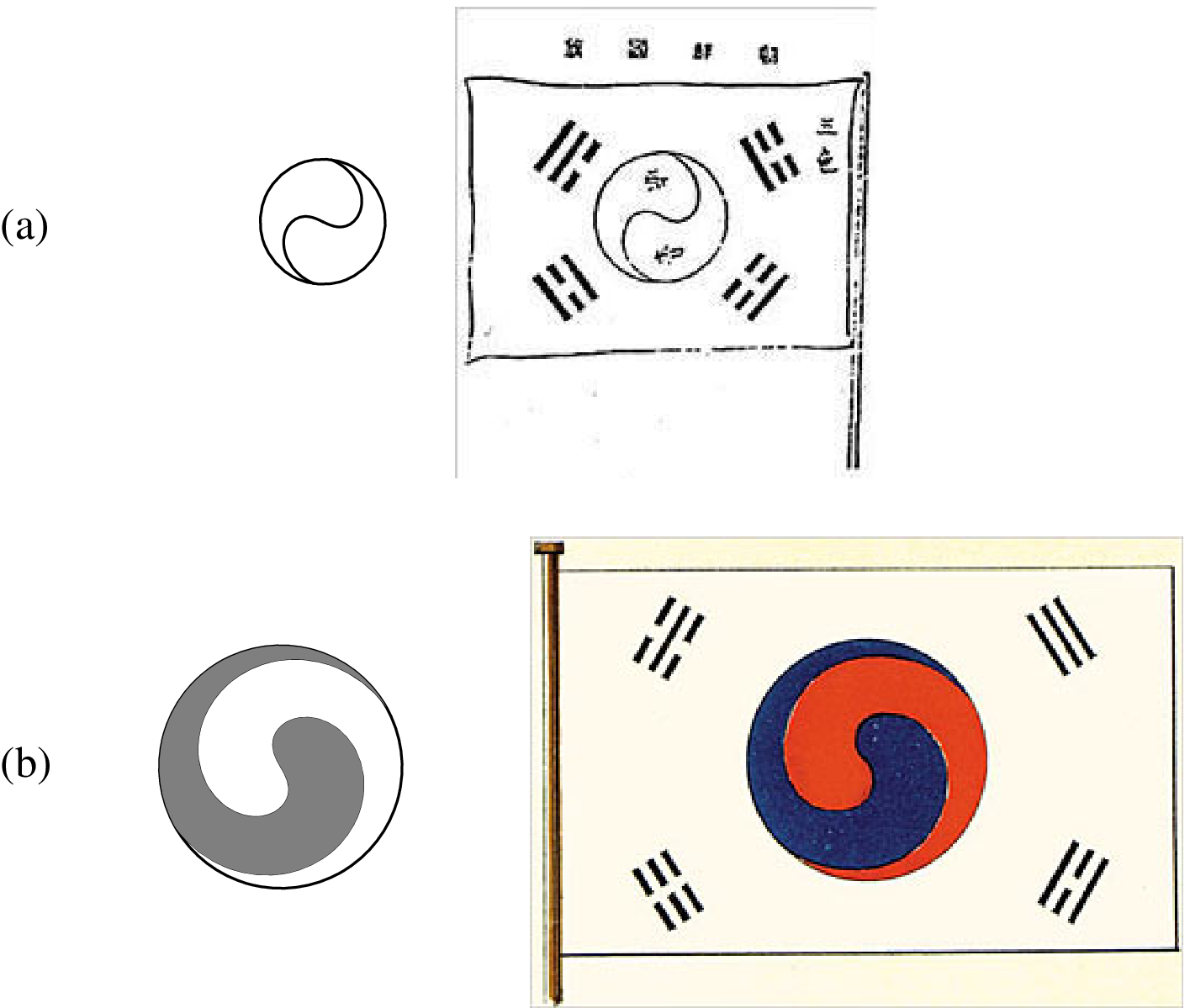}}
\caption{(a) On the right: a flag of Korea's Chosun Dynasty \cite{chosun}.
On the left: the output of the MetaPost code in Appendix \protect\ref{app:code}
with parameters {\tt turn=.6; radius=.75cm; rotate=-8}.\newline\mbox{}\qquad
(b) On the right: the earliest version of the Korean flag \cite{flag}.
On the left: the output of the MetaPost code in Appendix \protect\ref{app:code}
with parameters {\tt turn=1.5; radius=1.465cm; rotate=-60}.}
\label{fig:flags}
\end{figure}


\begin{thebibliography}{1}

\bibitem{Izv}
T.~Banakh, Ya.~Vorobets, and O.~Verbitsky.
\newblock
Ramsey-type problems for spaces with symmetry.
\newblock
{\em Izvestiya RAN, seriya matematicheskaya\/} 64(6):3--40 (2000).
\newblock
In Russian. English translation in:
{\em Russian Academy of Sciences. Izvestiya. Mathematics\/}
64(6):1091--1127 (2000).

\bibitem{EJC}
T.~Banakh, Ya.~Vorobets, and O.~Verbitsky.
\newblock
A Ramsey treatment of symmetry.
\newblock
{\em The Electronic Journal of Combinatorics\/}
7(\#R52):1--25 (2000).

\bibitem{Beer}
R.~Beer.
\newblock
{\em The Handbook of Tibetan Buddhist Symbols.}
Serindia Publications, Inc.\ (2003).

\bibitem{FL} 
B.A.~Fuks, V.I.~Levin. 
\newblock
{\it Functions of a complex variable and their applications.} 
GITTL, Moscow (1951). In Russian.
\newblock 
English translation:
{\it Functions of a complex variable and some of their 
applications II.} 
Pergamon Press, Oxford, and Addison-Wesley, Reading, MA (1961).

\bibitem{Kir} 
F.~Kirwan.
\newblock
{\it Complex algebraic curves.}
\newblock
Cambridge University Press (1992).

\bibitem{NNi}
R.~Narasimhan, Y.~Nievergelt.
\newblock
{\it Complex analysis in one variable.}
\newblock
Springer Verlag (2001). 

\bibitem{Shac}
M.~Shackelford.
\newblock
The T’ai Chi and the cycle of Chinese months.
\newblock
An online publication,
{\sl www.fengshui-magazine.com/Trigrams.htm},
accessed \accessed.

\bibitem{Tsai}
A.~Tsai.
\newblock
Where does the Yin Yang symbol come from?
\newblock
An online publication, 
{\sl www.chinesefor} {\sl tunecalendar.com/yinyang.htm},
accessed \accessed.

\bibitem{flag}
The earliest surviving depiction of the flag of Korea, 
as published in a U.S. Navy book {\it Flags of Maritime Nations\/} 
in July 1882. 
\newblock
Reproduced in: {\it Wikimedia Commons},
{\sl http://en.wikipedia.org/wiki/Image:Taegukgi.jpg},
accessed \accessed.


\bibitem{chosun}
A drawing of a flag of Korea's Chosun Dynasty published in the Japanese newspaper 
\emph{Jijishinpo} on October 10, 1882.
\newblock
Reprinted in: {\it The Chosun Ilbo}, January 26, 2004.
{\sl http://english.chosun.com/w21data/html/news/200401/200401260030.html},
accessed \accessed.

\bibitem{SamTaegeuk}
Sam-Taegeuk.
\newblock
An image from the {\it Wikimedia Commons},
{\sl http://en.wikipedia.org/wiki/File:} {\sl Sam\underline{\ }Taeguk.jpg\#file},
accessed \accessed.
\newblock
Author: Craig Nagy, {\sl http://www.flickr.com/} {\sl photos/nagy/}.
The image is licensed under \emph{Creative Commons Attribution 2.0} License
{\sl http://creativecommons.org/licenses/by/2.0/}

\bibitem{Britan}
Yin-yang. 
\newblock
Article in \emph{The New Encyklop\ae dia Britannica},
15-th edition. Microp\ae dia, Vol.\ 12, page 845 (1991).

\end{thebibliography}
\end{document}